\newtheorem{theorem}{Theorem}[section]
\newtheorem{lemma}[theorem]{Lemma}
\newtheorem{proposition}[theorem]{Proposition}
\newtheorem{corollary}[theorem]{Corollary}
\newcommand{\RR}{\mathbf R}
\newcommand{\PP}{\mathbf P}
\newcommand{\GG}{\mathbf G}
\newcommand{\Eff}{\overline{\operatorname{Eff}}}
\newcommand{\Lin}{\operatorname{Lin}}
\newcommand{\Nef}{\operatorname{Nef}}
\newcommand{\wt}{\widetilde}
\newcommand{\iso}{\cong}
\newcommand{\Bs}{\operatorname{Bs}}
\newcommand{\arrow}{\rightarrow}
\newcommand{\dual}{\ast}
\newcommand{\num}[1]{[#1]}
\tikzset{vertical bars/.default={2,...,\numexpr\pgfmatrixcurrentcolumn-1},
         vertical bars/.code={\xdef\pgftablecurrentvbars{#1}},
         horizontal bars/.default={},
         horizontal bars/.code={\xdef\pgftablecurrenthbars{#1}}}
\tikzstyle{table}=[inner sep=0pt,
\tikzset{table/.default=1}
\newcommand{\xmark}{\text{\it\sffamily x}\mskip2mu}
\newcommand{\cmark}{\mskip4mu\checkmark}
\title{Effective cycles on some linear blowups of projective spaces}
\date{}
\author{N. Pintye, A. Prendergast-Smith}
\begin{document}
\maketitle

Cones of curves and divisors have played a central role in birational geometry since the groundbreaking work of Mori in the early 1980s. There are general results, such as the Cone Theorem, describing the structure of these cones, as well as numerous explicit calculations in cases of geometric interest.

More recently, there has been increased interest in cycles of intermediate dimensions. Debarre--Ein--Lazarsfeld--Voisin \cite{DELV} showed that in general, these cycles do not share the good properties of divisors or curves: in particular, numerical positivity need not imply geometric positivity for such cycles. Nevertheless, there has been significant progress in extending the theoretical understanding of such cycles, due to Fulger--Lehmann \cite{FL1, FL2}, Ottem \cite{Ott} and others. By contrast, the number of examples in which cones of effective cycles have been explicitly computed is relatively small. The most significant results to date were found by Coskun--Lesieutre--Ottem \cite{CLO}, who computed cones of cycles on blowups of projective spaces at sets of points.

In this paper, we compute cones of effective cycles on some varieties obtained by blowing up general sets of lines in projective space. These cones are more complicated to compute than those of point blowups in two ways: first, a hyperplane in projective space cannot contain many general lines, and so inductive techniques tend to be less useful; second, the coefficients of the intersection form on the blowup vary with dimension, making uniform statements more difficult to find. In spite of these difficulties we are able to compute cones in some interesting examples, which we now explain.

Blowing up a small number of lines in projective space gives a toric variety, so the cone of effective cycles is generated by torus-invariant subvarieties, hence linear subspaces. Our main results show that linear generation continues to hold when the number of lines is increased beyond the toric range: for example, the blowup of $\PP^4$ in more than 2 lines is no longer toric, but we show in Theorem \ref{theorem-2cyclesP4} that its cone of 2-cycles is still linearly generated when we blow up in 3 or 4 lines. Similarly, in Theorem \ref{theorem-2cyclesP5}, we show that the cones of 2-cycles is linearly generated when we blow up at most 5 lines in $\PP^5$, but the cone of 3-cycles fails to be linearly generated once we blow up 4 lines. Finally, in Section \ref{section-curves}, we complement these theorems with some results about linear generation of cones of curves and divisors.

Our results are summarised in the following tables. In each table, the entry in row $k$ and column $r$ shows whether the cone of effective $k$-cycles on the blowup of projective space of the relevant dimension in $r$ general lines is linearly generated (or if the answer is not known). Note that once linear generation fails for a blowup, it fails for all further blowups, so any entry to the right of the symbol {\it\sffamily x} in a given row is also not linearly generated.


\begin{figure}[ht]
\centering
Dimension $4$\par
\vspace{1.2ex}
\begin{tikzpicture}
\matrix[table] (T) {
\vrule width 0pt depth 5pt\lower 1.5pt\hbox{$k$}\mskip-4mu\setminus\mskip-4mu\raise 2pt\hbox{$r$}&\smash{\leq}\,2&3&4&5&6&7&8&9&10\\
1&\cmark&\cmark&\cmark&\cmark&\cmark&\cmark&?&?&\xmark\\
2&\cmark&\cmark&\cmark&\xmark&&&&&\\
3&\cmark&\cmark&\cmark&\xmark&&&&&\\
};
\node[below left=-2] at (T-cell-2-7.east)  {\tiny\ref{prop-curvesinP4}};
\node[below left=-2] at (T-cell-2-10.east) {\tiny\ref{prop-curvesinP4}};
\node[below left=-2] at (T-cell-3-4.east)  {\tiny\ref{theorem-2cyclesP4}};
\node[below left=-2] at (T-cell-3-5.east)  {\tiny\ref{theorem-2cyclesP4}};
\node[below left=-2] at (T-cell-4-4.east)  {\tiny\ref{prop-divisorsonP4}};
\node[below left=-2] at (T-cell-4-5.east)  {\tiny\ref{prop-divisorsonP4}};
\end{tikzpicture}
\end{figure}

\begin{figure}[ht]
\centering
Dimension $5$\par
\vspace{1.2ex}
\begin{tikzpicture}
\matrix[table] (T) {
\vrule width 0pt depth 5pt\lower 1.5pt\hbox{$k$}\mskip-4mu\setminus\mskip-4mu\raise 2pt\hbox{$r$}&\smash{\leq}\,3&4&5&6\\
1&\cmark&\cmark&\cmark&\xmark\\
2&\cmark&\cmark&\cmark&?\\
3&\cmark&\xmark&&\\
4&\cmark&\xmark&&\\
};
\node[below left=-2] at (T-cell-2-4.east) {\tiny\ref{prop-curvesinP5}};
\node[below left=-2] at (T-cell-2-5.east) {\tiny\ref{prop-curvesinP5}};
\node[below left=-2] at (T-cell-3-4.east) {\tiny\ref{theorem-2cyclesP5}};
\node[below left=-2] at (T-cell-4-2.east) {\tiny\ref{prop-toriccones}};
\node[below left=-2] at (T-cell-4-3.east) {\tiny\ref{prop-3cyclesonP5}};
\node[below left=-2] at (T-cell-5-2.east) {\tiny\ref{prop-toriccones}};
\node[below left=-2] at (T-cell-5-3.east) {\tiny\ref{prop-divisorsonP5}};
\end{tikzpicture}
\end{figure}


The pattern we find agrees with Coskun--Lesieutre--Ottem's results, namely that as we blow up more, cones of lower-dimensional cycles remain linearly generated for longer than cones of higher-dimensional cycles. It would be interesting to find uniform bounds ensuring linear generation for blowups of projective space in general sets of linear subspaces of arbitrary dimension.

Thanks to Izzet Coskun and Elisa Postinghel for helpful conversations.
\section{Preliminaries}

We work throughout over an algebraically closed field of characteristic zero.

\subsection{Intersection theory} \label{subsection-int}

Our goal in this paper is to compute cones of cycles. The natural contexts for these cones are the spaces of numerical classes of cycles, which we now introduce. In the examples we will consider, these spaces are just Chow groups with real coefficients, but we use the language of numerical classes for consistency with the general theory. 

Let $X$ be a smooth proper variety of dimension $n$. Let $Z_k(X)$ denote the group of algebraic cycles of dimension $k$ on $X$. We define the vector space of numerical classes of $k$-cycles to be 
\begin{align*}N_k(X) := \left( Z_k(X)/\equiv \right) \otimes \RR\end{align*} where $\equiv$ denotes numerical equivalence of cycles. For each $k$, this is a finite-dimensional real vector space, and intersection gives a perfect pairing $N_k(X)\times N_{n-k}(X) \arrow \RR$. For convenience, we often write $N^k(X)$ instead of $N_{n-k}(X)$. For a $k$-dimensional subvariety $Z$ in $X$, we write $\num{Z}$ to denote its class in $N_k(X)$. A fundamental feature of this product is positivity of proper intersections: if $X$ is a smooth proper variety of dimension $n$, and $V$ and $W$ are subvarieties of dimension $k$ and $n-k$, respectively, such that $V \cap W$ is a finite set, then $\num{V} \cdot \num{W} \geq 0$.

A class $\alpha \in N_k(X)$ is {\it effective} if there are subvarieties $Z_1,\ldots,Z_m$ and non-negative real numbers $r_1,\ldots,r_m$ such that $\alpha = \sum_{i=1}^m r_i [Z_i]$. A class $\alpha \in N^k(X)$ is called \emph{nef} if $\alpha \cdot \num{Z} \geq 0$ for every $k$-dimensional subvariety $Z$ in $X$ or, equivalently, if $\alpha \cdot \beta \geq 0$ for every effective class $\beta \in N_k(X)$. We need some basic facts about the behaviour of nef cycles under morphisms:
\begin{proposition} \label{prop-pushpull}
  Let $f: Y \arrow X$ be a morphism of smooth projective varieties.
  \begin{enumerate}
  \item[(a)] If $\alpha \in N^k(X)$ is nef, then $f^*\alpha \in N^k(Y)$ is nef. 
    \item[(b)] If $f$ is surjective and $\alpha \in N_k(X)$ is a cycle such that $f^*\alpha$ is nef, then $\alpha$ is nef.
  \end{enumerate}
\end{proposition}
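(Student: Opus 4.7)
My plan is to deduce both parts from the projection formula $f_*(f^*\alpha\cdot\gamma) = \alpha\cdot f_*\gamma$ (equivalently $f^*\alpha\cdot\gamma = \alpha\cdot f_*\gamma$ when paired appropriately) combined with the observation that pushforward of an irreducible cycle is either zero or a positive multiple of its image.

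For part (a), I would start with an effective $k$-dimensional class in $N_k(Y)$, which it suffices to test on the class $[Z]$ of a $k$-dimensional subvariety $Z\subset Y$. The projection formula gives $f^*\alpha\cdot[Z] = \alpha\cdot f_*[Z]$. Then I split into cases: either $f|_Z$ is generically finite of some positive degree $d$, in which case $f_*[Z] = d\,[f(Z)]$, or $\dim f(Z) < \dim Z$, in which case $f_*[Z] = 0$. In both cases $f_*[Z]$ is effective (or zero) in $N_k(X)$, so pairing with the nef class $\alpha$ yields a non-negative number.

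For part (b), I want to reverse the argument, and the key obstacle is that pullback of cycles is not well-defined at the cycle level; I need to produce, for every $k$-dimensional subvariety $Z\subset X$, an effective $k$-cycle on $Y$ that pushes forward to a positive multiple of $[Z]$. This is where surjectivity of $f$ is essential: since $f$ is surjective, $f^{-1}(Z)$ has an irreducible component $W$ dominating $Z$. If $\dim W > \dim Z$, I cut $W$ by an appropriate number of general very ample divisors on $Y$ to obtain a subvariety $\wt Z\subset W$ of dimension equal to $\dim Z$ that still dominates $Z$; then $f|_{\wt Z}$ is generically finite of some degree $d>0$ and $f_*[\wt Z] = d\,[Z]$.

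With this in hand, the projection formula gives $d\,\alpha\cdot[Z] = \alpha\cdot f_*[\wt Z] = f^*\alpha\cdot[\wt Z]$, and the right-hand side is non-negative because $f^*\alpha$ is nef and $[\wt Z]$ is effective. Since every effective class is a non-negative combination of classes of subvarieties, this shows $\alpha\cdot\beta \geq 0$ for every effective $\beta\in N_k(X)$, i.e.\ $\alpha$ is nef. The main subtlety I expect to handle carefully is checking that the multisection construction really produces a subvariety of the correct dimension dominating $Z$, which requires choosing the cutting divisors generically on $Y$ rather than pulling back divisors from $X$.
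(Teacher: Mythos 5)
Your proof is correct and follows essentially the same route as the paper: the projection formula plus effectivity of pushforward for part (a), and lifting an effective class through the surjection for part (b). The only difference is that the paper dismisses the lifting step in (b) as ``a standard hyperplane section argument,'' whereas you spell it out (component of $f^{-1}(Z)$ dominating $Z$, cut by general very ample divisors on $Y$), correctly flagging that the cutting divisors must be chosen on $Y$ rather than pulled back from $X$.
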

\begin{proof} (a): If $\beta \in N_k(Y)$ is effective, then $f_*\beta$ is also effective by definition of pushforward. So if $\alpha \in N^k(X)$ is nef, then using the projection formula for cycles, we get $f^*\alpha \cdot \beta = \alpha \cdot f_*\beta \geq 0$ for every effective cycle $\beta$ in $N_k(Y)$.

(b): Let $\beta \in N^k(X)$ be an effective class. Since $f$ is surjective, by a standard hyperplane section argument there exists an effective class $\widetilde{\beta} \in N^k(Y)$ such that $f_* \widetilde{\beta} = \beta$. By the projection formula and nefness of $f^*\alpha$, we have $\alpha \cdot \beta = f^* \alpha \cdot \widetilde{\beta} \geq 0$, showing that $\alpha$ is nef as required.
\end{proof}
In general the intersection of nef cycles need not be nef \cite[Corollary 2.2]{DELV}, but for divisors this is true: 
\begin{lemma}  \label{lemma-nefintersections}
Let $X$ be a smooth projective variety. If $D$ and $E$ are nef divisor classes on $X$, then $DE$ is a nef class in $N^2(X)$.
\end{lemma}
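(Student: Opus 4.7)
The plan is to reduce to the surface case. Given an irreducible subvariety $Z \subseteq X$ of dimension $n-2$ (where $n=\dim X$), I would need to show that $DE \cdot [Z] \geq 0$. First I would take a resolution of singularities $\pi: \widetilde{Z} \arrow Z$ with $\widetilde{Z}$ a smooth projective surface, and let $f: \widetilde{Z} \arrow X$ denote the composition of $\pi$ with the inclusion $Z \hookrightarrow X$. Since $\pi$ is birational, $f_*[\widetilde{Z}] = [Z]$, and the projection formula for cycles yields
$$DE \cdot [Z] = f^*D \cdot f^*E \cdot [\widetilde{Z}],$$
which is just an intersection number on the surface $\widetilde{Z}$. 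Because $f$ is a morphism of smooth projective varieties, Proposition \ref{prop-pushpull}(a) guarantees that $f^*D$ and $f^*E$ are nef divisor classes on $\widetilde{Z}$.

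The problem therefore reduces to the classical statement that on a smooth projective surface $S$, the intersection of any two nef divisor classes is nonnegative. To prove this, I would fix nef classes $D', E' \in N^1(S)$ and invoke Kleiman's theorem that the nef cone is the closure of the ample cone. Writing $D' = \lim_m A_m$ with $A_m$ ample, one has that a sufficiently large multiple of $A_m$ is very ample, hence numerically equivalent to an effective divisor, so nefness of $E'$ gives $A_m \cdot E' \geq 0$. Passing to the limit yields $D' \cdot E' \geq 0$.

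There is no serious obstacle: the proof is a straightforward packaging of three ingredients (the projection formula, existence of a resolution of $Z$, and Kleiman's characterization of the nef cone in codimension one). The only minor subtlety is that $Z$ may be singular, which is exactly why I pass to the smooth surface $\widetilde{Z}$ before invoking Proposition \ref{prop-pushpull}(a), whose hypotheses require a morphism of smooth projective varieties.
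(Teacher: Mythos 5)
Your argument is correct, and its engine is the same as the paper's: approximate a nef divisor by ample classes and pass to the limit. The difference is in the packaging. The paper works directly on $X$: for an effective $\alpha \in N_2(X)$ it writes $E = \lim_i E_i$ with $E_i$ ample, observes that $E_i\,\alpha$ is an effective $1$-cycle class, applies nefness of $D$ to get $D \cdot (E_i\,\alpha) \geq 0$, and takes the limit. You instead first restrict to an irreducible surface $Z$, pass to a resolution $\widetilde{Z}$, use the projection formula to rewrite $DE \cdot [Z]$ as an intersection number of the two nef classes $f^*D, f^*E$ on the smooth surface $\widetilde{Z}$, and then invoke the classical fact that nef divisors on a smooth projective surface intersect nonnegatively (proved by the same ample-approximation trick). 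Your route buys a reduction to a well-known two-dimensional statement, at the cost of invoking resolution of singularities (harmless here, since the paper works in characteristic zero) --- an ingredient the paper's proof avoids entirely by keeping the effective cycle class on $X$ and never needing $Z$ to be smooth. One small point worth making explicit in your last step: to say a multiple of $A_m$ is very ample you should choose the approximating ample classes $A_m$ to be rational, which is possible since the ample cone is open and the nef cone is its closure.
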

\begin{proof}
We need to prove that for any effective class $\alpha \in N_2(X)$, we have $DE \cdot \alpha \geq 0$. Since $E$ is nef, we can find a sequence of ample divisor classes $\{E_i\}$ converging to $E$ in $N^1(X)$. For each $E_i$, the intersection $E_i \alpha$ is effective, and so $D \cdot (E_i \, \alpha) \geq 0$. Taking the limit, we get $DE \cdot \alpha = \lim_i D \cdot (E_i \, \alpha) \geq 0$, as required.
\end{proof}

\subsection*{Numerical classes on blowups}
In the rest of the paper, we will write $X^n_{r,s}$ to denote the blowup of $\PP^n$ in a collection of $r$ general lines $L_1,\ldots,L_r$ and $s$ general points $p_1,\ldots,p_s$. Our main examples have $s=0$, and we denote these simply by $X^n_r$.

The ring $N^*(X^n_{r,s})=CH(X^n_{r,s}) \otimes \RR$ is generated by classes $H$, $E_i$ for $i=1,\ldots, r$, and $e_i$ for $i=1,\ldots,s$, which are respectively the pullback of the hyperplane class on $\PP^n$, the exceptional divisors of the blowups of the $L_i$ and the exceptional divisors of the blowups of the $p_i$. We will use the following intersection numbers among these classes \cite[Corollary 9.12]{3264}:
\begin{align*}
  H^n=1, \quad &E_i^n=(-1)^n(n-1), \quad e_i^n = (-1)^{n-1} \\
  H \cdot E_i^{n-1} = (-1)^n, \quad &H^j \cdot E_i^{n-j} = H^k \cdot e_i^{n-k}=0 \text{ for } i \geq 1, \, k \geq 0.
\end{align*}
We also need to know the numerical classes on $X^n_{r,s}$ of the proper transforms of certain subvarieties of $\PP^n$. The blowup formula \cite[Theorem 6.7]{Fulton} allows us to calculate these as long as we know the Segre classes of the blowup centre inside the subvariety: in particular, when we blow up lines and points, these are easy to compute. In particular, we note the following:
\begin{corollary} \label{corollary-classes}
 Let $X^n_{r,s}$ be the blowup of $\PP^n$ in $r$ general lines and $s$ general points. Let
  \begin{itemize}
  \item $U$ be a linear space of codimension $k$ intersecting $L_i$ transversely,
  \item $V$ be a linear space of codimension $k$ containing $L_i$, and let
  \item $Q$ be a quadric of codimension $k$ containing $L_i$.
  \end{itemize}
  The numerical classes of the proper transforms of these spaces have the following coefficients:
\begin{equation*}
\begin{tikzpicture}
\matrix[table] {%
  {} & H^k & HE_i^{k-1} & E_i^k \\
  \vrule width 0pt height 12pt depth 0pt\num{\widetilde{U}} & 1 & {(-1)}^{k-1}\phantom{k} & 0 \\
  \vrule width 0pt height 12pt depth 0pt\num{\widetilde{V}} & 1 & {(-1)}^{k-1}k & {(-1)}^k \\
  \vrule width 0pt height 12pt depth 0pt\num{\widetilde{Q}} & 2 & {(-1)}^{k-1}(k+1) & {(-1)}^k \\
};
\end{tikzpicture}
\end{equation*}  
If $Z$ is any subvariety of codimension $k$ containing $p_i$ as a smooth point, then the coefficient of $e_i^k$ in $[\wt{Z}]$ equals $(-1)^k$.
\end{corollary}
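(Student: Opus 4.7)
My plan is to apply the blowup formula of Fulton (Theorem~6.7) cited in the text to each of the four cases. Because the blowup centres $L_1,\ldots,L_r$, $p_1,\ldots,p_s$ are pairwise disjoint and in general position, each of their contributions to $[\wt{Y}]$ decouples, so it is enough to analyse the blowup along a single line $L$ (for the first three claims) or a single point $p$ (for the last), with exceptional divisor written simply as $E$ or $e$ respectively.

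For the linear subvariety $V$ of codimension $k$ containing $L$, I would write $V = V_1\cap\cdots\cap V_k$ as the transverse intersection of $k$ generic hyperplanes through $L$. Each has proper transform class $[\wt{V_i}] = H-E$, and by genericity the $\wt{V_i}$ meet transversely both on $\wt{X}\setminus E$ and inside $E$, so the product formula gives $[\wt{V}] = (H-E)^k$. The binomial expansion collapses to the three-term formula of the table because $H^{k-j}E^j = 0$ in $N^k(\wt{X})$ for $2\le j\le k-2$, which one verifies by pairing against the basis $\{H^{n-k},\,HE^{n-k-1},\,E^{n-k}\}$ of $N^{n-k}(\wt{X})$ and using the intersection numbers stated just above the corollary. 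The quadric case $Q$ is essentially identical once one replaces one hyperplane factor by a smooth quadric $Q_1$ through $L$, so that $[\wt{Q_1}] = 2H-E$ and therefore $[\wt{Q}] = (2H-E)(H-E)^{k-1}$.

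The transverse case $U$ is subtler, because each $\wt{U_i}$ now contains the full fibre $F_p = \pi^{-1}(p)\cong\PP^{n-2}$ over the intersection point $p = U\cap L$, so the product $[\wt{U_1}]\cdots[\wt{U_k}] = H^k$ is not equal to $[\wt{U}]$: there is excess intersection along $F_p$. Rather than compute the excess contribution directly, I would write $[\wt{U}] = aH^k + bHE^{k-1}+cE^k$ and determine the coefficients by pairing with the dual basis $\{H^{n-k},\,HE^{n-k-1},\,E^{n-k}\}$. The coefficient $a = \deg(U) = 1$ is immediate from intersection with $H^{n-k}$, and the other two are determined by intersection numbers that reduce via push-pull to computations on $E$, where $\wt{U}\cap E$ is just a linear $\PP^{n-k-1}$ sitting inside the single fibre $F_p$ and therefore has a very simple class.

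The last claim yields to the same direct approach: if $Z$ is smooth at $p$ of codimension $k$, then $\wt{Z}\cap e = \PP(T_p Z)$ is a linear $\PP^{n-k-1}$ in $e\cong\PP^{n-1}$. Pairing $[\wt{Z}]$ with $e^{n-k}$ and reducing via push-pull from $e$ using $e|_e = -\xi$ (with $\xi$ the hyperplane class of $e$) isolates the coefficient of $e^k$, which evaluates to $(-1)^k$ after an easy integration on $\PP^{n-1}$. The main obstacle throughout is careful sign-bookkeeping arising from the convention for the projectivization $E = \PP(N_{L/\PP^n})$ and the identity $E|_E = -\xi$; the transversality required to legitimise the product-formula steps is routine from the generality of the chosen hyperplanes and quadric.
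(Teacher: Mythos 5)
Your argument is correct, but it is worth noting that the paper does not actually write out a proof of this corollary: it simply points to Fulton's blowup formula \cite[Theorem 6.7]{Fulton}, the intended route being to compute the Segre class $s(L_i\cap Y, Y)$ for each of $Y=U,V,Q,Z$ and substitute. You take a genuinely different and more elementary path: for $V$ and $Q$ you realise the subvariety as a transverse intersection of hypersurfaces through $L_i$ and multiply the divisor classes $(H-E_i)$ and $(2H-E_i)$, while for $U$ and $Z$ (where the naive product has excess intersection along the fibre over $U\cap L_i$, as you correctly observe) you switch to undetermined coefficients and pin them down by pairing with the dual monomials $H^{n-k}$, $HE_i^{n-k-1}$, $E_i^{n-k}$ and restricting to the exceptional divisor. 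This buys self-containedness and makes the sign conventions completely explicit, at the cost of having to justify the transversality of the chosen hypersurfaces and of their proper transforms along $E_i$ (routine, as you say) and of checking that $\{H^k, HE_i^{k-1}, E_i^k\}$ really does span the relevant piece of $N^k$. One small slip: for the binomial expansion of $(H-E_i)^k$ to collapse to three terms you need $H^{k-j}E_i^j\equiv 0$ for all $1\le j\le k-2$, not just $2\le j\le k-2$; the omitted case $j=1$ (the term $-kH^{k-1}E_i$) is indeed numerically trivial for $k\ge 3$ by exactly the pairing computation you describe (every complementary monomial it meets carries at least two factors of $H$ restricted to $E_i\iso\PP^1\times\PP^{n-2}$, where $H^2|_{E_i}=0$), so this is a typo rather than a gap, but it should be stated correctly.
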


\subsection{Cones of cycles} \label{sect-cones}
For a smooth projective variety $X$, the \emph{pseudoeffective cone} $\Eff_k(X)$ is the closed convex cone in $N_k(X)$ generated by numerical classes of $k$-dimensional subvarieties of $X$. The \emph{nef cone} $\Nef^k(X)$ is the cone spanned by all nef classes in $N^k(X)$: in other words, it is the dual cone of $\Eff_k(X)$. 

Now we specialise the discussion to our examples $X^n_{r,s}$. A subvariety of $X^n_{r,s}$ is called {\it linear} if it is one of the following:
\begin{enumerate}
\item[(a)] the proper transform on $X^n_{r,s}$ of a linear subspace of $\PP^n$, or
\item[(b)] the pullback to $E_i \iso \PP^1 \times \PP^{n-2}$ of a linear subspace in one of the factors, or
\item[(c)] a linear subspace in $e_i \iso \PP^{n-1}$.
\end{enumerate}
The \emph{linear cone} $\Lin_k(X^n_{r,s})$ is the cone in $N_k(X^n_{r,s})$ generated by the finitely many classes of $k$-dimensional linear subvarieties. We say that the pseudoeffective cone of $k$-cycles on $\Eff(X^n_{r,s})$ is \emph{linearly generated} if it equals the linear cone $\Lin_k(X^n_{r,s})$. Note that any blowup map $X^n_{r,s} \arrow X^n_{r-a,s-b}$ maps the effective cone onto the effective cone and the linear cone onto the linear cone, so if $\Eff(X^n_{r,s})$ is linearly generated, then so too is $\Eff(X^n_{r-a,s-b})$.

\subsection{Toric varieties}
Cones of cycles on toric varieties are well-understood. For later use, let us record the facts we need:
\begin{proposition} \label{prop-toriccones}
Let $X$ be a normal proper toric variety. Then, $\Eff_k(X)$ is generated by the finitely many classes of $k$-dimensional torus-invariant subvarieties on $X$. Consequently, if the variety $X^n_{r,s}$ is toric, then $\Eff_k(X^n_{r,s})$ is linearly generated for all $k$. 
\end{proposition}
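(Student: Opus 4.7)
The proposition splits into two parts which I would address in turn: the general fact that effective cones on normal proper toric varieties are generated by torus-invariant subvarieties, and the specific conclusion for $X^n_{r,s}$.

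For the first part, the plan is to degenerate an arbitrary $k$-dimensional subvariety $Z \subset X$ to a torus-invariant cycle using the action of $T$. After choosing a one-parameter subgroup $\lambda \colon \GG_m \to T$ in sufficiently general position, the family $t \mapsto \lambda(t)\cdot Z$ extends by properness of $X$ to a flat family over $\mathbf{A}^1$, whose special fibre is the torus-invariant limit $Z_0 = \lim_{t \to 0} \lambda(t)\cdot Z$. Since flat families yield rational (and hence numerical) equivalences, $[Z] = [Z_0]$ in $N_k(X)$, and by construction $Z_0$ is a nonnegative combination of classes of $T$-invariant $k$-dimensional subvarieties of $X$. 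Because $\Eff_k(X)$ is generated by classes $[Z]$ of subvarieties, this gives the first claim. This is the classical degeneration argument of toric intersection theory; alternatively it could simply be cited from the literature.

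For the second part, suppose $X^n_{r,s}$ is toric, and classify its $T$-invariant subvarieties via the cones of the fan. They split into three families: proper transforms of coordinate linear subspaces of $\PP^n$ (linear by (a)); torus-invariant linear subspaces inside the point-blowup exceptional divisors $e_i \cong \PP^{n-1}$ (linear by (c)); and torus-invariant subvarieties inside the line-blowup exceptional divisors $E_i \cong \PP^1 \times \PP^{n-2}$. The first two cases are immediate. The main step to verify is the last, where one must check that every product $A \times B$ of coordinate linear subspaces $A \subseteq \PP^1$ and $B \subseteq \PP^{n-2}$ is captured by (b), if necessary by rewriting its class as a nonnegative combination of classes of pullbacks of linear subspaces from the two factors. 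Once this matching of torus-invariant with linear subvarieties is confirmed, the first part yields $\Eff_k(X^n_{r,s}) = \Lin_k(X^n_{r,s})$, as required.
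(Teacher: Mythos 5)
Your proposal is correct and follows essentially the same route as the paper: the paper simply cites \cite[Proposition 3.1]{Li} for the first statement and then, for the second, observes exactly as you do that the torus-invariant subvarieties of $X^n_{r,s}$ are either proper transforms of coordinate subspaces of $\PP^n$ or torus-invariant subvarieties of the exceptional divisors $E_i \iso \PP^1 \times \PP^{n-2}$ and $e_i \iso \PP^{n-1}$. Two small remarks. First, in your sketch of the degeneration argument, the flat limit under a single generic one-parameter subgroup $\lambda$ is only invariant under the closure of the image of $\lambda$, not under all of $T$; the standard proof (as in Fulton--MacPherson--Sottile--Sturmfels, which underlies the cited result) iterates the degeneration $\dim T$ times, enlarging the subtorus of invariance at each step. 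Your fallback of citing the literature, which is what the paper does, covers this. Second, the ``main step'' you flag for the factors $E_i$ is genuinely there but is resolved by how the paper uses definition (b) in practice: its lists of linear generators include all classes of products $A \times B$ of coordinate subspaces of the two factors (e.g.\ both $F_i$ and $F_i+G_i$ in $N_2(X^4_4)$), so every torus-invariant subvariety of $E_i$ is itself a linear subvariety and no rewriting of classes is needed.
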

\begin{proof}
  The first statement is well-known; a reference is \cite[Proposition 3.1]{Li}. 

  For the second statement, note that the torus-invariant subvarieties of $\PP^n$ are exactly the coordinate subspaces, so if $X^n_{r,s}$ is a toric blowup of $\PP^n$, any torus-invariant subvariety on $X^n_{r,s}$ that comes from $\PP^n$ is the proper transform of a coordinate subspace and hence is linear. On the other hand, every exceptional divisor of $X \arrow \PP^n$ is of the form $E_i \iso \PP^1 \times \PP^{n-2}$ or $e_i \iso \PP^{n-1}$, so the torus-invariant subvarieties of the exceptional divisor are also linear.
\end{proof}

\subsection{Computations} \label{sect-comp}
In this paper, we will use computer algebra in several different contexts. In all cases, we use the computer algebra system Macaulay2. In particular, for all computations of dual numerical cones, we use the package {\tt Normaliz} \cite{Nor} for Macaulay2. Note that for compactness, we always list the generators of all cones ``up to permutation'': that is, a full list of generators is obtained from our list by permuting indices in the appropriate way.

The full outputs of our computations are available in ancillary files provided with this paper \cite{M2}. The name of each file in the repository indicates the result in the paper in which the output of the computation is used.

\section{Codimension 2 linear spaces} 
In this section, we prove that codimension 2 linear spaces incident to lines give nef classes in $X^n_r$ for $r\leq n \leq 5$. The main idea of the proof is to verify by a dimension count that we can find such a linear space properly intersecting any given subvariety of complementary dimension. As mentioned in the introduction, proper intersections are non-negative, so this is sufficient to prove our claim.

We begin with some preparatory results about intersections of Schubert cycles.
\begin{lemma} \label{lemma-linesinP3}
  Let $l_1,\ldots,l_4$ be a set of 4 distinct lines in $\PP^3$, and let $\Lambda \subset \GG(1,3)$ be the set of lines touching all 4. Then, one of the following is true:
  \begin{enumerate}[label=(\alph*)]
  \item the set $\Lambda$ has dimension 2, in which case one of the following is true:
    \begin{enumerate}
    \item[(i)] all 4 lines are concurrent, or
      \item[(ii)] all 4 lines are coplanar;
    \end{enumerate}
       \item the set $\Lambda$ has dimension 1, in which case one of the following is true:
    \begin{enumerate}
  \item[(i)] the lines are all pairwise skew and lie on a smooth quadric surface $Q \in \PP^3$, or
  \item[(ii)] there are exactly 2 pairs of intersecting lines, say $l_1,l_2$ and $l_3,l_4$, and the intersection point of $l_1,l_2$ lies in the plane spanned by $l_3,l_4$, or
  \item[(iii)] there are 3 concurrent lines, say $l_1, l_2, l_3$, and the line $l_4$ is skew to all others, or
    \item[(iv)] there are 3 coplanar lines, say $l_1, l_2, l_3$, and the line $l_4$ is skew to all the others;
    \end{enumerate}
  \item the set $\Lambda$ has dimension 0.
  \end{enumerate}
\end{lemma}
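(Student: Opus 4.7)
The plan is to identify $\GG(1,3)$ with the Klein quadric $Q \subset \PP^5$ and study $\Lambda$ as a linear section of $Q$. Each Schubert cycle $\Lambda_{l_i}$ is the hyperplane section $Q \cap H_{l_i}$, where $H_{l_i}$ is the polar hyperplane of $[l_i]$ with respect to the Klein bilinear form $\omega$ (using $\omega([m],[l]) = 0 \iff m \cap l \neq \emptyset$). So $\Lambda = Q \cap \bigcap_i H_{l_i}$, and the linear intersection $\bigcap_i H_{l_i}$ is the polar of $\langle [l_1],\ldots,[l_4]\rangle$, of projective dimension $4-r$, where $r := \dim \langle [l_1],\ldots,[l_4]\rangle$. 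The maximal linear subspaces of $Q$ are the two-dimensional $\alpha$-planes (lines through a common point) and $\beta$-planes (lines in a common plane), so the dimension of $\Lambda$ is controlled by $r$ together with whether the polar subspace lies inside $Q$.

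To handle case (a), $\dim \Lambda = 2$, I would observe that any two-dimensional irreducible component of $\Lambda$ is a $2$-plane contained in $Q$, hence an $\alpha$-plane at some $p$ or a $\beta$-plane at some $\Pi$. The containment of such a plane in every $\Lambda_{l_i}$ then forces every $l_i$ either to pass through $p$, giving (a)(i), or to lie in $\Pi$, giving (a)(ii).

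The main work is the case $\dim \Lambda = 1$, which splits into two sub-cases: either $r = 3$ and the polar line lies in $Q$, or $r = 2$ and the polar 2-plane meets $Q$ in a plane conic. In the first sub-case, a line in $Q$ corresponds to a pencil of lines through some point $p$ in some plane $\Pi \ni p$, and requiring every line of the pencil to meet $l_i$ forces each $l_i$ to contain $p$ or to lie in $\Pi$. Sorting $\{l_1,\ldots,l_4\}$ by this dichotomy produces partitions of sizes $(1,3)$, $(2,2)$, or $(3,1)$ (the extreme partitions $(0,4)$ and $(4,0)$ falling into case (a)), and reading off the pairwise incidences yields, respectively, cases (b)(iv), (b)(ii), (b)(iii). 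In the second sub-case the four $[l_i]$ lie on a smooth conic in $Q$, which classically corresponds to a ruling of a smooth quadric surface in $\PP^3$, giving case (b)(i); reducible conics in $Q$ degenerate back into the pencil configurations already handled by the first sub-case.

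The main obstacle is the geometric bookkeeping in the pencil case: one must verify that in the $(2,2)$ split the required incidence $p \in \langle l_3, l_4\rangle$ of case (b)(ii) is automatic from the construction (it is, since $\Pi = \langle l_3, l_4\rangle$ and $p \in \Pi$ by definition of the pencil), and check that the enumerated configurations in (b) are disjoint and together exhaust all $\dim \Lambda = 1$ possibilities. Case (c) then arises as the complement once (a) and (b) are classified.
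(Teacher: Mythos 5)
Your Klein-quadric setup is a genuinely different route from the paper's proof, which simply enumerates intersection patterns of the four lines and checks the dimension of $\Lambda$ case by case; reducing everything to the polar subspace $P^\perp$ of $P=\langle[l_1],\ldots,[l_4]\rangle$ and to the classification of linear subspaces of the quadric is cleaner, and it correctly delivers case (a), case (b)(i) and case (c). (Two small points you should make explicit: when $\dim P=1$ the polar is a $\PP^3$ and $Q\cap P^\perp$ is a quadric of rank at most $2$, so its $2$-dimensional components are indeed planes in $Q$, as your argument for (a) assumes; and when the conic $Q\cap P^\perp$ is reducible, $\Lambda$ is a union of two pencils and each $l_i$ must satisfy your dichotomy for both of them, which is not quite ``the first sub-case.'')

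The genuine gap is the sentence ``reading off the pairwise incidences yields, respectively, cases (b)(iv), (b)(ii), (b)(iii).'' The dichotomy ``$l_i\ni p$ or $l_i\subset\Pi$'' is not exclusive, and a line in one class can perfectly well meet a line in the other class, so the claimed incidence patterns do not follow from the $(3,1)$, $(2,2)$, $(1,3)$ splits. Concretely, take $l_1,l_2,l_3$ concurrent at $p$ and not coplanar, and let $l_4$ meet $l_1$ in a point $q\neq p$ while being skew to $l_2$ and $l_3$. Then $l_1$ and $l_4$ both lie in the plane $\Pi=\langle p,l_4\rangle$, and $\Lambda$ is exactly the pencil of lines through $p$ inside $\Pi$, so $\dim\Lambda=1$; but this configuration has four intersecting pairs and is none of (b)(i)--(iv). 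So this step cannot be repaired as stated: the list in (b) is genuinely incomplete (your $(3,1)$ and $(1,3)$ splits produce such extra configurations whenever one of the lines belongs to both classes). For what it is worth, the paper's own proof has the same omission --- it verifies only two ``remaining'' configurations --- and the only way the lemma is actually used, in the proof of Theorem \ref{theorem-planesinP5}, is through the weaker dichotomy ``either two of the four lines meet, or they are pairwise skew and lie on a smooth quadric,'' which your sub-case analysis does establish correctly.
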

\begin{proof}
  For each case listed in $(a)$ and $(b)$ above, the given dimension count is straightforward to verify. It remains to check that in all other cases, the set $\Lambda$ has dimension 0.
  In the case that all lines are pairwise skew, this is well-known, so we must consider the cases in which some of the lines intersect. There are two possibilities not covered by the list above:
  \begin{itemize}
  \item two lines, say $l_1$ and $l_2$, intersect, and all other pairs are skew;
    \item there are exactly 2 pairs $l_1, \ l_2$ and $l_3, \, l_4$ of intersecting lines, and neither of the intersection points of the two pairs lies in the plane spanned by the other pair.
  \end{itemize}
  In the first case, any line intersecting all 4 lines must either lie in the plane spanned by $l_1$ and $l_2$, or pass through the intersection point of $l_1$ and $l_2$. In each case, however, there is a unique such line which also intersects $l_3$ and $l_4$.

  In the second case, no line contained in either of the planes spanned by two intersecting lines can intersect the other two lines. So the only line intersecting all 4 lines is the line joining the two intersection points of the pairs $l_1,l_2$ and $l_3,l_4$.\end{proof}

\begin{lemma} \label{lemma-nopoint}
  Let $l_1,\ldots,l_n$ be a set of $n$ general lines in $\PP^n$ for $n=4$ or $5$. Let $\Lambda \subset \GG(n-2,n)$ be the subset of the Grassmannian parametrising codimension-2 linear spaces touching all the lines. Then
\begin{enumerate}
  \item[(a)] $\Lambda$ is irreducible;
    \item[(b)] The intersection of all the linear spaces parametrised by points of $\Lambda$ is empty.
  \end{enumerate}
\end{lemma}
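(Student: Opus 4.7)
The plan is to prove both parts using a common fibration: the rational map $\Lambda \dashrightarrow l_n \cong \PP^1$ sending a codimension-$2$ linear space $M$ to its (generically unique) intersection with $l_n$. The fiber over $p \in l_n$ consists of codimension-$2$ linear spaces $M \ni p$ meeting $l_1, \ldots, l_{n-1}$, which after projecting $\PP^n$ from $p$ corresponds bijectively to $(n-3)$-planes in $\PP^{n-1}$ meeting the projected lines $\bar l_i = \pi_p(l_i)$. For part $(a)$ when $n = 4$, this fiber is the set of lines in $\PP^3$ meeting three general lines, which is one ruling of the unique smooth quadric they span, an irreducible $\PP^1$. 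For $n = 5$, the fiber is the analogous variety $\Lambda'$ for four general lines in $\PP^4$, irreducible of dimension $2$ by the $n = 4$ case. Since the base $\PP^1$ is irreducible and the general fiber is irreducible of constant dimension, $\Lambda$ is irreducible in both cases.

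For part $(b)$ when $n = 4$, suppose for contradiction that some $q \in \PP^4$ lies on every $M \in \Lambda$, so that $\Lambda_q := \{M \in \Lambda : q \in M\}$ equals $\Lambda$. By irreducibility it suffices to show $\dim \Lambda_q < 2$. Projecting from $q$, the elements of $\Lambda_q$ correspond to lines in $\PP^3$ meeting the projected lines $\bar l_i$ (the incidence condition with any $l_i$ containing $q$ becomes automatic). If $q \notin \bigcup l_i$, we obtain four lines in $\PP^3$, and Lemma~\ref{lemma-linesinP3} bounds the dimension of their common transversals by $2$, with equality only in the concurrent or coplanar cases. Both are excluded by Schubert calculus on $\GG(1, 4)$: four general lines in $\PP^4$ admit no common transversal, and four general lines span $\PP^4$ rather than any $3$-plane. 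If instead $q \in l_1$ (at most one such index, since general lines are pairwise skew), the analogous analysis with three projected lines in $\PP^3$, using that three general lines in $\PP^4$ have a unique common transversal generically skew to $l_1$, bounds the dimension by $1$.

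For part $(b)$ when $n = 5$, I argue by induction from the $n = 4$ case. Suppose $q \in \bigcap_{M \in \Lambda} M$. For any $p \in l_5$ with $p \neq q$, the fiber of $\Lambda \to l_5$ over $p$ consists of $3$-planes containing both $p$ and $q$ while meeting $l_1, \ldots, l_4$. After projection from $p$, this fiber is identified with the full variety $\Lambda'$ of $2$-planes in $\PP^4$ meeting the four projected lines $\bar l_i$, and every element of $\Lambda'$ must contain the point $\bar q = \pi_p(q)$. This contradicts part $(b)$ applied to $\Lambda'$ in $\PP^4$. The main obstacle throughout is verifying that the projected lines remain in sufficiently general position to exclude the special configurations of Lemma~\ref{lemma-linesinP3}, which reduces to the Schubert-theoretic non-existence of common transversals for the original lines.
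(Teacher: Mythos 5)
Your route is genuinely different from the paper's: the paper proves (a) by flatness plus openness of the locus of integral fibres, checked at one explicit configuration by computer, and proves (b) by comparing the degree of $\Lambda$ with that of the Schubert cycle $\Sigma_p$ cut by Pl\"ucker hyperplanes. Your fibration-over-$l_n$ strategy is attractive and I believe it can be completed, but as written part (a) rests on a false principle. ``Base irreducible and \emph{general} fibre irreducible of constant dimension'' does not imply the total space is irreducible: a family can have an extra component sitting inside a special fibre without changing the general fibre or the fibre dimension. The Shafarevich-type criterion needs \emph{all} fibres irreducible, and your fibres over the finitely many $p\in l_n$ lying in some $\operatorname{Span}(l_i,l_j)$ are visibly reducible (two of the projected lines meet, and the common transversals then split into two one-dimensional families). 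To repair this you need two further inputs: every component of $\Lambda$ has dimension at least $n-2$, because $\Lambda$ is the intersection of $\GG(n-2,n)$ with $n$ hyperplanes in the Pl\"ucker embedding; and every fibre of $\Lambda\to l_n$ has dimension exactly $n-3$, which requires ruling out concurrency and coplanarity of the projected lines for \emph{every} $p\in l_n$ (this does follow from the non-existence of common transversals to, and of hyperplanes containing, the original lines). Only then does every component dominate $l_n$ with fibre filling the whole irreducible general fibre, forcing any two components to coincide. You should also note that for $n=5$ the map is only rational -- there are finitely many $3$-planes in $\Lambda$ containing $l_5$ -- though this is harmless once all components are known to be positive-dimensional.

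The second gap is the one you flag yourself but do not close: both the $n=5$ case of (a) and the inductive step of (b) apply the $n=4$ statements to \emph{projected} configurations of four lines in $\PP^4$, which are not general. The statements do hold for these configurations, but only because the explicit open conditions your $n=4$ arguments actually use (pairwise skewness, no common transversal line, no containing hyperplane -- and, one projection deeper, the same again) each translate into a Schubert non-existence statement about the original five lines; for instance, a common transversal to $\bar l_1,\dots,\bar l_4$ through the image of $q$ lifts to a $2$-plane in $\PP^5$ meeting all five lines, which is a codimension-$10$ condition in the $9$-dimensional $\GG(2,5)$ and hence empty for general lines. Until these translations are written out, the induction is a plausible sketch rather than a proof. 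Your part (b) for $n=4$ itself is essentially correct, and is arguably more geometric and self-contained than the paper's degree comparison, since it leans only on Lemma~\ref{lemma-linesinP3}.
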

The restriction on $n$ can be removed at the cost of a more complicated proof, but the statement above is sufficient for our applications in later sections. The word ``general'' in the statement of the lemma means that the proof works for a Zariski open subset of points in the space of sets of $n$ lines; however, the proof does not produce such an open subset explicitly.
\begin{proof}
  (a): Let $U \subset \GG(1,n)^n$ be the open subset parametrising sets of $n$ distinct lines. Let $I \subset U \times \GG(n-2,n)$ be the incidence correspondence consisting of pairs $((L_1,\ldots,L_n),L)$ where $L$ is a codimension 2 linear space intersecting all of the $L_i$. Let $f: I \arrow U$ be the projection. We want to prove that a general fibre of $f$ is irreducible.

  Shrinking $U$ if necessary, we can assume that $f$ is flat; then by \cite[Theorem 12.2.1 (x)]{EGA}, the locus of integral fibres of $f$ is open. One checks (for example using Macaulay2) that for a particular point $u \in U$, the fibre $f^{-1}(u)$ is smooth and connected, hence integral, and so the general fibre of $f$ is integral and, in particular, irreducible. 

  (b): Suppose there is a point $p \in \PP^n$ such that every linear space parametrised by $\Lambda$ passes through $p$. Let $\Sigma_p \subset \GG(n-2,n)$ be the Schubert cycle parametrising linear spaces passing through $p$. In particular, we should have $\Lambda \subset \Sigma_p$. Let us show that this containment is impossible.

  First, note that $\Lambda$ has codimension $n$ in $\GG(n-2,n)$, while for any $p$, the Schubert variety $\Sigma_p$ has codimension 2. Considering the Pl\"ucker embedding of the Grassmannian $\GG(n-2,n)$ in projective space, we can view $\Lambda$ as $\GG(n-2,n) \cap H_1 \cap \cdots \cap H_n$ for certain hyperplanes $H_i$. Therefore, if $\Lambda \subset \Sigma_p$, we must have $\Lambda \subset \Sigma_p \cap H_1 \cap \cdots \cap H_{n-2}$.

  If the intersection $\Sigma_p \cap H_1 \cap \cdots \cap H_{n-2}$ is of the maximal codimension $n$, then $\Lambda$ must be an irreducible component of  $\Sigma_p \cap H_1 \cap \cdots \cap H_{n-2}$. However, the degree of  $\Sigma_p \cap H_1 \cap \cdots \cap H_{n-2}$ is the same as the degree of $\Sigma_p$, and Schubert calculus shows that this is strictly less than the degree of $\Lambda$, a contradiction.

  In general, suppose that $\Sigma_p \cap H_1 \cap \cdots \cap H_{n-2}$ is not of the maximal codimension $n$. We claim that we can move the hyperplanes $H_i$ to new hyperplanes $H_i^\prime$ such that both of the following hold:
    \begin{itemize}
    \item $\Sigma_p \cap H_1^\prime \cap \cdots \cap H_{n-2}^\prime$
      is of codimension $n$;
      \item $H_1^\prime \cap \cdots \cap H_{n-2}^\prime = H_1 \cap \cdots \cdots H_{n-2}.$
    \end{itemize}
    (Note that the new hyperplanes $H_i^\prime$ in general no longer correspond to Schubert varieties in the Grassmannian $\GG(n-2,n)$, but that does not affect our proof.) Given the claim, we can then write $\Lambda$ as $\GG(n-2,n) \cap H_1^\prime \cap \cdots \cap H_{n-2}^\prime$, and the argument from the previous paragraph applies again to complete the proof.

    It remains to prove the claim. Write $Z=H_1 \cap \cdots \cap H_{n-2}$. For $i<n-2$, assume we have chosen hyperplanes $H_1^\prime,\ldots,H_i^\prime$ such that each of them contains $Z$, and $\Sigma_p \cap H_1^\prime \cdots \cap H_i^\prime$ has codimension $i+2$. Since $i+2<n$, we see that $\Sigma_p \cap H_1^\prime \cdots \cap H_i^\prime$ is not contained in $\Lambda$, and since $\Lambda = \GG(n-2,n) \cap Z$, this proves it is not contained in $Z$ either. So we can find another hyperplane $H_{i+1}^\prime$ which contains $Z$ but does not contain $\Sigma_p \cap H_1^\prime \cdots \cap H_i^\prime$. Hence, the intersection $\Sigma_p \cap H_1^\prime \cdots \cap H_i^\prime \cap H_{i+1}^\prime$ has codimension $i+3$. Continuing in this way, we end up with hyperplanes $H_1^\prime, \ldots, H_{n-2}^\prime$ satisfying the two conditions above, as required.
\end{proof}

Now we can prove our first main result about nefness of codimension 2 linear spaces in $\PP^4$. The idea is to project away from a point and use the information from the previous lemmas about configurations of 4 lines in $\PP^3$.
\begin{theorem}\label{theorem-planesinP4}
Let $r \leq 4$. Let $L^4_r$ be the proper transform on $X^4_r$ of a codimension 2 linear space in $\PP^4$ that intersects all the blown-up lines properly. Then, $L^4_r$ is nef.
  \end{theorem}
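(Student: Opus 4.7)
The approach is to use positivity of proper intersections. All codimension-2 linear subspaces $L \subset \PP^4$ meeting every $L_i$ transversely have proper transforms with the same numerical class on $X^4_r$ (by Corollary \ref{corollary-classes}, the coefficients in the basis $H^2, H E_i, E_i^2$ depend only on the incidence type). So to show $[L^4_r]$ is nef it suffices to prove: for every irreducible 2-dimensional subvariety $Z \subset X^4_r$ there is some such $L$ with $\widetilde{L} \cap Z$ finite. Then $[L^4_r] \cdot [Z] = [\widetilde{L}] \cdot [Z] \geq 0$, as intersections of subvarieties of complementary dimension meeting in a finite set are non-negative.

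I would split the argument according to the dimension of the image $\pi(Z) \subset \PP^4$, where $\pi \colon X^4_r \to \PP^4$ is the blowup map. If $\dim \pi(Z) = 0$, then $\pi(Z) = \{q\}$ with $q \in L_i$ for some $i$, and $Z$ must be the whole $\PP^2$-fibre of $E_i \cong \PP^1 \times \PP^2$ over $q$. A generic $L \in \Lambda$ meets $L_i$ away from $q$ (using Lemma \ref{lemma-nopoint}(b) to rule out the contrary, since otherwise $q$ would lie in $\bigcap_{L \in \Lambda} L$), giving $\widetilde{L} \cap Z = \emptyset$. If $\dim \pi(Z) = 1$, then $\pi(Z)$ must be one of the blown-up lines $L_i$, and $Z \subset E_i$ is a surface dominating $L_i$. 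The proper transform $\widetilde{L}$ meets $E_i$ in a $\PP^1$ inside the $\PP^2$-fibre over $L \cap L_i$, whose direction is determined by $L$ modulo $T L_i$; varying $L \in \Lambda$, this line can be made to avoid the linear components of the fibres of $Z \to L_i$, so the intersection is finite.

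The main case is $\dim \pi(Z) = 2$, and here I would follow the projection approach indicated in the preamble. Fix a general point $q \in \pi(Z)$ and project $\pi_q \colon \PP^4 \dashrightarrow \PP^3$ from $q$. Codimension-2 linear subspaces of $\PP^4$ through $q$ correspond bijectively to lines in $\PP^3$, and such subspaces meeting all $L_i$ correspond exactly to lines in $\PP^3$ meeting every projected line $\pi_q(L_i)$. For $r \leq 4$ general lines in $\PP^3$, Lemma \ref{lemma-linesinP3} bounds the dimension of this set of lines (finite for $r=4$); consequently, for generic $q$ the fibre of the incidence variety $J = \{(x,L) \in \pi(Z) \times \Lambda : x \in L\}$ over $q$ is low-dimensional. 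A dimension count then gives $\dim J \leq \dim \Lambda$, so a generic $L \in \Lambda$ meets $\pi(Z)$ in a finite set. A further generic perturbation within $\Lambda$ ensures these points lie off all the $L_i$, yielding $\widetilde{L} \cap Z$ finite in $X^4_r$.

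The main obstacle is the case analysis, and in particular making sure that the generic choice of $L$ works uniformly when $Z$ is in special position with respect to $\Lambda$. Lemma \ref{lemma-nopoint}(b) (emptiness of $\bigcap_{L \in \Lambda} L$) rules out the worst degeneracy, namely that $\pi(Z)$ lies in the common base locus of the family, while the classification in Lemma \ref{lemma-linesinP3} eliminates the remaining special configurations of lines after projection. For $r < 4$ the family $\Lambda$ is only larger ($\dim \Lambda = 6-r \geq 3$), so the dimension counts hold \emph{a fortiori} and the argument goes through with less work.
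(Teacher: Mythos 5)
Your skeleton matches the paper's: reduce to proper transforms of surfaces $S\subset\PP^4$, form the incidence correspondence between $S$ and $\Lambda$, and rule out excess intersection by projecting from a point of $S$ and appealing to the classification of transversals to four lines in $\PP^3$. But the decisive step is missing. In your main case you assert that for generic $q\in\pi(Z)$ the fibre $\Lambda_q$ is finite because Lemma \ref{lemma-linesinP3} "bounds the dimension of this set of lines (finite for $r=4$)" and that the classification "eliminates the remaining special configurations." It does not: the projected lines $\pi_q(L_1),\dots,\pi_q(L_4)$ are not four \emph{general} lines in $\PP^3$ — they are images of fixed lines under projection from a point constrained to lie on an adversarially chosen surface — and case (b)(i) of Lemma \ref{lemma-linesinP3} explicitly allows four pairwise skew lines on a smooth quadric to admit a one-dimensional family of transversals. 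Nothing in your argument excludes the possibility that $S$ is contained in the locus of points $q$ for which this happens, i.e.\ the locus of vertices of quadric cones in $\PP^4$ containing $L_1,\dots,L_4$. Ruling this out is where the paper spends the entire second half of its proof: a rank analysis showing (by generality, and after excluding $q\in\operatorname{Span}(L_i,L_j)$ via Lemma \ref{lemma-appendix1}) that any such cone has rank $4$ and hence a point vertex, followed by a study of the $2$-dimensional linear system of quadrics through the four lines — including a Bertini argument when the general member is singular — showing that the resulting vertex locus is at most $1$-dimensional and so contains no surface. Without this, your dimension count $\dim J\leq\dim\Lambda$ is unjustified, and the surfaces contained in the spans $\operatorname{Span}(L_i,L_j)$ (where the projection degenerates) are not treated at all.

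Two smaller points. First, your cases $\dim\pi(Z)\leq 1$ are handled more robustly by the numerical observation the paper uses: the restriction of $[L^4_4]$ to $E_i\cong\PP^1\times\PP^{2}$ is an effective, hence nef, curve class, so $[L^4_4]\cdot[Z]\geq 0$ for any surface $Z\subset E_i$. Your geometric version of the $\dim\pi(Z)=1$ case could fail to produce a proper intersection (e.g.\ if for every $L\in\Lambda$ the line $\widetilde L\cap E_i$ coincides with a line component of the fibre of $Z$ over $L\cap L_i$), even though the intersection number is still non-negative. Second, your "a fortiori" reduction for $r<4$ is plausible but would need the jumping loci of $\dim\Lambda_q$ re-examined; the paper's pullback argument ($\pi^*[L^4_{r-1}]=[L^4_r]+[F]$ together with Proposition \ref{prop-pushpull}) is cleaner. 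These are fixable; the quadric-cone vertex locus is the genuine gap.
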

\begin{proof}
  We first observe that if $L^4_r$ is nef on $X^4_r$, then $L^n_{r-1}$ is nef on $X^4_{r-1}$. To see this, note that the pullback of the class $\num{L^4_{r-1}}$ equals $\num{L^4_r}+\num{F}$, where $F$ is a fibre of the blowup. If $L^4_r$ is nef, then any irreducible surface that has negative intersection with the pullback of $L^4_{r-1}$ must have negative intersection with $F$ and so must be contained in $E_r$, since $F$ is a nef divisor in $E_r$. But surfaces contained in $E_r$ are contracted by the blowup map, so they have zero intersection with the pullback of $\num{L^4_{r-1}}$ by the projection formula. So the pullback of $\num{L^4_{r-1}}$ is nef, and therefore $L^4_{r-1}$ is nef by Proposition \ref{prop-pushpull}. So it suffices to prove that $L^4_4$ is nef.

The restricition of $L_4^4$ to any of the divisors $E_i$ is an effective curve class, hence nef, so if $S$ is an irreducible surface contained inside one of the divisors $E_i$, then $L^4_4 \cdot \tilde{S} \geq 0$. We can therefore restrict our attention to irreducible surfaces $\tilde{S}$ that are proper transforms of surfaces $S$ in $\PP^4$. For such a surface, the intersection $\tilde{S} \cap E_i$ is 1-dimensional, hence a union of curves. We can write it in the form $\tilde{S} \cap E_i = C_1 \cup \cdots \cup C_k \cup \Gamma_1 \cup \cdots \cup \Gamma_j$, where the $C_i$ are curves contained in fibres of the blowdown map $X^4_4 \arrow \PP^4$, and the $\Gamma_j$ intersect each fibre of $\pi$ in finitely many points. By Lemma \ref{lemma-nopoint}, we can choose a plane $L^4_4$ that is disjoint from any given finite set of fibres of $\pi$, and for such a plane, we get $L^4_4 \cap \tilde{S} \cap E_i = L^4_4 \cap ( \cup_{k=1}^j \Gamma_j)$, a finite set of points. Therefore, if $\tilde{S}$ is a surface intersecting every plane $L^4_4$ non-properly, we see that $S \cap L$ has dimension at least 1 for every plane $L \subset \PP^4$ intersecting all 4 lines.

So suppose that $S \subset \PP^4$ is an irreducible surface such that $\dim (S \cap L ) \geq 1$ for every plane $L \subset \PP^4$ intersecting all 4 lines. We form the following incidence correspondence:
\begin{center}
  \begin{tikzcd}
I = \left\{ (L, p) \mid L \in \Lambda, \, p \in S \cap L \right\}  \arrow{r}{\pi_1} \arrow{d}{\pi_2} & S \\
\Lambda &
  \end{tikzcd}
\end{center}
Here $\Lambda \subset \GG(2,4)$ is the subset of the Grassmannian parametrising planes intersecting all 4 lines. By Lemma \ref{lemma-nopoint}, $\Lambda$ is irreducible of dimension 2. Hence, by our assumption on the dimension of the fibres of $\pi_2$, we see that $I$ has dimension at least 3.

Every fibre $\pi_1^{-1}(p)$ is a subset of $\Lambda$, which is irreducible of dimension 2, so any fibre of dimension 2 must equal $\Lambda$. But if a fibre $\pi_1^{-1}(p)$ equals $\Lambda$, then all the planes parametrised by $\Lambda$ pass through the point $p$, contradicting Lemma \ref{lemma-nopoint} (b). Hence, no fibre $\pi_1^{-1}(p)$ has dimension 2.

Therefore, every fibre of $\pi_1$ has dimension 1, and so $\pi_1$ is surjective. That is, for every point $p \in S$, there are infinitely many planes $L$ passing through $p$ and intersecting all 4 lines. We will show that this is impossible.

By Lemma \ref{lemma-appendix1}, we may assume that $S$ is not contained in any of the linear spaces $\operatorname{Span}(L_i,L_j)$. By this assumption, if $p \in S$ is a general point, then when we project away from $p$, the images of our lines $L_1,\ldots,L_4$ give 4 skew lines $l_1, \ldots, l_4$ in $\PP^3$. Under this projection, planes $L \subset \PP^4$ passing through $p$ and intersecting all the lines $L_i$ correspond to lines $l \subset \PP^3$ intersecting all the lines $l_i$. So if there are infinitely many planes $L$ passing through $p$ and intersecting all 4 lines, then there must be infinitely many lines in $\PP^3$ intersecting the 4 skew lines $l_1,\ldots,l_4$.

For 4 skew lines $l_1,\ldots,l_4$ in $\PP^3$, there are at most 2 lines intersecting them all unless the 4 lines all lie on a quadric $Q \subset \PP^3$. So we must have that $p$ is contained in the vertex of a quadric cone $Q' \subset \PP^4$, which also contains the lines $L_1,\ldots,L_4$. Let us examine the possibilities for the rank of $Q'$:
    \begin{itemize}
    \item rank 1: in this case, all the lines $L_i$ would be contained in a hyperplane, contradicting generality;
    \item rank 2: in this case, all the lines would be contained in a union of 2 hyperplanes whose intersection contains $p$. Each of the 2 hyperplanes would be spanned by 2 of the lines $L_i$, contradicting the assumption that $p$ is not contained in the span of any 2 of the $L_i$;
      \item rank 3: in this case the vertex of $Q'$ is a line $L$, and projecting from $L$, maps $Q'$ to a smooth conic $Q'' \subset \PP^2$. On the other hand, any line in $Q'$ which is disjoint from $L$ would map to a line in $\PP^2$ contained in $Q''$, which is impossible. So all lines in $Q'$, in particular all the $L_i$, must intersect a fixed line $L$. Again by generality this is impossible.
    \end{itemize}
    We conclude that any such quadric $Q'$ must have rank 4, hence its vertex has dimension 0.

    The linear system $V$ of quadrics containing the $L_i$ has dimension 2. In order to complete the proof, we now analyse 2 possible cases.

    If the general member of $V$ is smooth, then the subset of singular quadrics has dimension at most 1. We just proved that, except for the 3 quadrics of rank 2 which are unions of hyperplanes $\operatorname{Span}(L_i,L_j)$, the vertex of any such quadric has dimension 0. So we get a 1-dimensional set of vertices of quadrics outside the subsets $\operatorname{Span}(L_i,L_j)$. This 1-dimensional set cannot contain any surface $S$, so there cannot exist a surface $S$ outside the subspaces $\operatorname{Span}(L_i,L_j)$ such that through each point of $S$ there pass infinitely many planes touching all the lines $L_i$.

    If the general member of $V$ is singular, then Bertini's theorem still guarantees that the set of singularities of a general member of $V$ is contained in the base locus $\Bs (V)$. Other than the 3 rank-2 quadrics from the last paragraph, the set of members of $V$ whose singular set is not contained in $\Bs (V)$ is at most 1-dimensional, so the set of singular points of such quadrics again gives a 1-dimensional set. On the other hand, $\Bs (V)$ is also 1-dimensional, as one sees, for example, by intersecting the 3 rank-2 quadrics, so we get a 1-dimensional set of vertices altogether. Again, this set cannot contain a surface $S$.\end{proof}

Next we prove the corresponding result for codimension 2 linear spaces in $\PP^5$. The idea of the proof in this case is to project away from a line, rather than a point, and then argue as before.
\begin{theorem} \label{theorem-planesinP5}
Let $r \leq 5$. Let $L^5_r$ be the proper transform on $X^5_r$ of a codimension 2 linear space in $\PP^5$ that intersects all the blown-up lines properly. Then, $L^5_r$ is nef.
\end{theorem}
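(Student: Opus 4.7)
The strategy is to mirror the template of the proof of Theorem \ref{theorem-planesinP4}, with the key modification (signalled in the text) that we project from a line rather than a point, so as to reduce the combinatorial analysis to configurations of lines in $\PP^3$.

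The reduction to $r=5$ is identical to the $\PP^4$ case: pulling back $L^5_{r-1}$ from $X^5_{r-1}$ adds a class of a fibre $F$ of the blowup, which is nef inside $E_r$, so the projection formula plus Proposition \ref{prop-pushpull}(b) propagates nefness downwards. Similarly, irreducible surfaces $\tilde S$ contained in some exceptional divisor $E_i$ pair non-negatively with $L^5_5$ because the restriction $L^5_5|_{E_i}$ is an effective curve class; and for $\tilde S$ the proper transform of a surface $S\subset\PP^5$, decomposing $\tilde S\cap E_i$ into curves lying in finitely many fibres of the blowdown plus curves surjecting onto $L_i$, and invoking Lemma \ref{lemma-nopoint}(b) to move $L^5_5$ away from any prescribed finite set of fibres, reduces the problem to the following assertion: if $S\subset\PP^5$ is an irreducible surface with $\dim(L\cap S)\geq 1$ for every $L\in\Lambda\subset\GG(3,5)$, we must derive a contradiction.

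I would next form the incidence $I=\{(L,p):L\in\Lambda,\ p\in L\cap S\}$ with projections $\pi_1:I\arrow S$ and $\pi_2:I\arrow\Lambda$. By Lemma \ref{lemma-nopoint}(a), $\Lambda$ is irreducible of dimension $3$, so the hypothesis on fibres of $\pi_2$ gives $\dim I\geq 4$. Lemma \ref{lemma-nopoint}(b) rules out any fibre of $\pi_1$ being all of $\Lambda$, so those fibres have dimension at most $2$; therefore $\pi_1$ is surjective and through every general $p\in S$ passes a $2$-dimensional family $\Lambda_p$ of 3-planes in $\Lambda$. The plan is then to choose a line $\ell\subset\PP^5$ through $p$ (disjoint from each $L_i$) such that a positive-dimensional subfamily of $\Lambda$ contains $\ell$: since projection from $\ell$ maps $\PP^5$ to $\PP^3$, identifies 3-planes through $\ell$ with lines in $\PP^3$, and sends each $L_i$ to a line $l_i\subset\PP^3$ while preserving incidence, such a line $\ell$ furnishes a positive-dimensional family of transversals to the five lines $l_1,\ldots,l_5$ in $\PP^3$. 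An extension of Lemma \ref{lemma-linesinP3} to five lines then forces the $l_i$ into a severely degenerate position (concurrent, coplanar, or lying on a smooth quadric), which lifts back to $\PP^5$ to place the $L_i$ on a low-dimensional variety whose vertex/singular locus has dimension at most $1$; this contradicts the assumption that $p$ varies over the $2$-dimensional surface $S$. An analogue of Lemma \ref{lemma-appendix1} takes care of the excluded case where $S$ is already contained in the span of a subset of the $L_i$.

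The main obstacle will be producing the line $\ell$. The naive count — the $2$-dimensional family $\Lambda_p$ meets the $4$-dimensional Grassmannian of lines through $p$ in an incidence of the ``wrong'' balance — shows that a generic line through $p$ is contained in only finitely many members of $\Lambda_p$, so $\ell$ must be chosen to lie over a proper image locus. Natural candidates are the intersection line $L\cap L'$ of two generic members $L,L'\in\Lambda_p$, or a line in the non-dominant portion of the sweep $\bigcup_{L\in\Lambda_p}L$ after projection from $p$; in either case the argument will demand a secondary incidence calculation carried out in the Grassmannian of 3-planes through a line. The final case analysis ruling out each degeneration of the $l_i$ in $\PP^3$ is also likely to be longer than in the four-line case, since the extra line imposes additional constraints that intersect non-trivially with each configuration listed in Lemma \ref{lemma-linesinP3}.
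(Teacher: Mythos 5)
Your reduction to $r=5$, the passage to irreducible surfaces $S\subset\PP^5$ satisfying $\dim(S\cap L)\geq 1$ for all $L\in\Lambda$, and the incidence argument showing that a general point $p\in S$ lies on a $2$-dimensional family $\Lambda_p\subset\Lambda$ all match the paper. The genuine gap is at the step you yourself flag as the main obstacle: producing a line $\ell$ through $p$ that is contained in a positive-dimensional subfamily of $\Lambda_p$. Neither of your candidate constructions is shown to work, and your own dimension count (the incidence $\{(\ell,L)\mid p\in\ell\subset L,\ L\in\Lambda_p\}$ has dimension $2+2=4$, mapping to the $4$-dimensional family of lines through $p$) indicates that a general line through $p$ lies in only finitely many members of $\Lambda_p$, so the existence of such an $\ell$ is genuinely in doubt. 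Worse, your insistence that $\ell$ be disjoint from every $L_i$ forces you to classify configurations of \emph{five} lines in $\PP^3$ admitting a positive-dimensional family of transversals, an extension of Lemma \ref{lemma-linesinP3} that you do not supply.

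The paper resolves both difficulties at once by dropping the disjointness requirement. Every member of $\Lambda_p$ meets $L_1$, so $\Lambda_p$ maps to $L_1$ by recording the intersection point; hence for each $q\in L_1$ the subfamily $\Lambda_{pq}$ of members containing the line $\overline{pq}$ is automatically (at least) $1$-dimensional, with no existence question to settle. Projection from $\overline{pq}$ sends $L_1$ to a point and $L_2,\ldots,L_5$ to \emph{four} lines in $\PP^3$ carrying a $1$-dimensional family of transversals --- precisely the situation classified by Lemma \ref{lemma-linesinP3} as stated. After eliminating the intersecting-pair cases by a suitable choice of $q$, one is left with four skew lines on a smooth quadric; coning back produces a corank-$2$ quadric in $\PP^5$ containing $L_2,\ldots,L_5$ whose vertex line passes through $p$ and meets $L_1$, and Lemma \ref{lemma-transversality} (a Kleiman-transversality dimension count for which your sketch has no substitute) shows this family of quadrics is only $2$-dimensional, whence the locus of such points $p$ has dimension at most $1$ or is a plane --- in either case it supports no non-linear surface. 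You should rework the argument along these lines; as written, the proposal does not close.
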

\begin{proof}
  As in the previous theorem, it suffices to prove the result when $r=5$. We suppose for contradiction that there is an irreducible surface $S \subset \PP^5$ such that $\operatorname{dim}(S \cap L) \geq 1$ for every codimension 2 linear space $L$ that intersects all 5 lines. Again, we form the incidence correspondence
  \begin{center}
  \begin{tikzcd}
I = \left\{ (L, p) \mid L \in \Lambda, \, p \in S \cap L \right\}  \arrow{r}{\pi_1} \arrow{d}{\pi_2} & S \\
\Lambda &
  \end{tikzcd}
\end{center}
  where now $\Lambda \subset \GG(3,5)$ is the subset of the Grassmannian parametrising linear spaces intersecting all 5 lines. Arguing exactly as before, we see that all fibres of $\pi_1$ must have dimension 2. We will show that the locus of points $p \in \PP^5$ through which we have a 2-dimensional family of linear spaces from $\Lambda$ does not contain any irreducible surfaces except for those contained in subspaces $\operatorname{Span}(L_i,L_J)$. As the proper transform of such a subspace is a toric variety, its cone of surfaces is linearly generated, and so $L^5_5$ has a non-negative intersection product with the class of any such surface.

  So assume $p \in \PP^5$ is a point such that the set $\Lambda_p$ of linear spaces in $\Lambda$ that pass through $p$ is 2-dimensional. By Proposition \ref{prop-toriccones}, we can assume the surface $S$ above does not lie in one of the linear spaces $\operatorname{Span}(L_i,L_j)$, so it is enough to consider points $p$ not in any of these linear spaces.

  Fix one of the lines, say $L_1$. First, we claim that for any point $q \in L_1$, the subset $\Lambda_{pq} \subset \Lambda_p$ consisting of linear spaces through both $p$ and $q$ has dimension 1. If this were not the case, there would be a point $q \in L_1$ such that the family of linear spaces through $p$ and $q$ has dimension 2. Projecting away from the line joining $p$ and $q$, the lines $L_2, L_3, L_4, L_5$ would then map to lines $l_2, l_3, l_4, l_5$ in $\PP^3$ with a 2-dimensional family of lines intersecting all 4. This can only happen in the following cases: first, two of the lines coincide; second, all four lines pass through a common point $p \in \PP^3$; third, all four lines lie in a common plane $P \subset \PP^3$. The first case only occurs if the centre of projection $\overline{pq}$ is contained in $\operatorname{Span}(L_i,L_j)$ for some $i \neq j$, but since $q$ is a point in $L_1$, this means that $L_1$ intersects $\operatorname{Span}(L_i,L_j)$, contradicting generality of the lines. The second case occurs only if there is a 2-dimensional linear space in $\PP^5$ (namely, the cone over the point $p$) intersecting all 5 lines $L_i$, and again, this contradicts generality. The third cases only occurs if there is a hyperplane in $\PP^5$ (namely, the cone over $P$) containing all 5 lines $L_i$, and again, this contradicts generality.
  
  So we see that for any $q \in L_1$, the set of linear spaces through $p$ and $q$ and intersecting the lines $L_2,L_3,L_4,L_5$ has dimension 1. We may assume that the line $\overline{pq}$ is not contained in any of the linear subspaces $\operatorname{Span}(L_i,L_j)$, so projecting away from the line $\overline{pq}$. we obtain a set of 4 distinct lines in $\PP^3$ such that the family of lines in $\PP^3$ touching all 4 has dimension 1. According to Lemma \ref{lemma-linesinP3}, either two of the lines intersect or else they are pairwise skew and lie on a smooth quadric in $\PP^3$.

  Let us first deal with the case when two of the lines intersect. We will think of projection away from the line $\overline{pq}$ as projection away from $p$ first, followed by projection away from the image of $q$ in $\PP^4$. As explained above, we can assume that $p$ does not lie in any of the linear spaces $\operatorname{Span}(L_i,L_j)$, so first projecting away from $p$ gives 5 skew lines $l_1,\ldots,l_5$ in $\PP^4$. We next project away from a point $\tilde{q}$ on $l_1$. If $l_1$ is contained in any of the hyperplanes $\operatorname{Span}(l_i,l_j)$, then in $\PP^5$, we would have three lines $L_1, \, L_i, \, L_j$ contained in a hyperplane, contradicting generality. So $l_1$ meets each of the hyperplanes $\operatorname{Span}(l_i,l_j)$ in a single point. Choosing $\tilde{q}$ to be different from all of these points, the projection away from $q$ then gives us 4 pairwise skew lines in $\PP^3$. 
  
So we may suppose that the 4 lines are pairwise skew and lie on a smooth quadric surface in $\PP^3$. By taking the cone over this quadric, we get a quadric in $\PP^5$ of corank 2 that contains $L_2, L_3, L_4, L_5$ and whose vertex is a line intersecting $L_1$ and passing through $p$. Moreover, for each $q \in L_1$, we get such a quadric, so there is a 1-dimensional family of lines through $p$ that are vertices of quadrics of this type. We will prove that the set of such points $p$ either has dimension at most 1 or is a plane in $\PP^5$.

  By Lemma \ref{lemma-transversality}, the family of quadrics in $\PP^5$ of corank 2 that contain the lines $L_2,\ldots,L_5$ and whose vertex intersects $L_1$ is of dimension 2. Call this 2-dimensional family $\mathcal F$ and consider the following incidence correspondence:

  \begin{center}
  \begin{tikzcd}
J = \left\{ (Q, p) \mid Q \in \mathcal F \text{ and } p \text{ lies on the vertex line of } Q \right\}  \arrow{r}{\pi_1} \arrow{d}{\pi_2} & \mathcal \PP^5 \\
\mathcal F&
  \end{tikzcd}
  \end{center}

 All fibres of $\pi_2$ are lines, so every irreducible component of $J$ has dimension 3. We may assume that $J$ is irreducible: if not, we apply the same argument to each component of $J$ in turn. We distinguish 2 possible cases. If $\pi_1$ is generically finite, then the points $p \in \PP^5$ which lie on a 1-dimensional family of vertex lines of members of $\mathcal F$ are contained in a proper closed subset $Z$ of $\pi_1(J)$. The preimage $\pi_1^{-1}(Z)$ is a proper closed subset of $J$, hence has dimension at most 2, and the fibres of $\pi_1$ over points of $Z$ are 1-dimensional by hypothesis. Hence, $Z$ has dimension at most 1. If $\pi_1$ is not generically finite, then $\pi_1(J)$ is irreducible of dimension at most 2. For each point $p \in \pi(J)$, there is a 1-dimensional family of vertex lines touching $L_1$ and passing through $p$. Such a family sweeps out a plane $\Pi$ inside $\PP^5$, and so $\pi_1(J)$ is a plane.
\end{proof}

\begin{lemma} \label{lemma-transversality}
  For any $k \in \{0,\ldots,n-1\}$ and any $N$, the set $\Lambda(k,N,n)$ of quadrics in $\PP^n$ of corank $k$ and containing $N$ general lines has the expected codimension
  \begin{align*}
 e(k,N,n) := \operatorname{max} \left\{ 3N + \binom{k+1}{2}, \binom{n+2}{2} \right\}.
  \end{align*}
  Moreover, for $k \geq 1$, the set $\Lambda_v(k,N,n)$ of those quadrics in $\Lambda(k,N,n)$ whose vertex intersects another general line has the expected codimension
   \begin{align*}
 \epsilon(k,N,n) :=  \operatorname{max} \left\{ e(k,N)+n-k-1,  \binom{n+2}{2} \right\}.
  \end{align*}
\end{lemma}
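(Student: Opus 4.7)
The plan is to resolve the corank stratum in the space of quadrics as a projective bundle over the Grassmannian of vertex planes, and then apply Kleiman's transversality theorem both fibre-wise (for line-containment) and on the base (for the vertex Schubert condition). Let $\mathcal{Q}_n = \PP(H^0(\PP^n,\mathcal{O}(2)))$, of projective dimension $\binom{n+2}{2}-1$, and consider
\[
\widetilde{S}_k = \{(Q,V) \in \mathcal{Q}_n \times \GG(k-1,n) : V \subset \operatorname{Sing}(Q)\}.
\]
The projection to $\GG(k-1,n)$ presents $\widetilde S_k$ as a projective bundle whose fibre over $V$ is the space of quadrics pulled back from $\PP^n/V \cong \PP^{n-k}$. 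A direct dimension count shows $\widetilde S_k$ has the same dimension as the corank-at-least-$k$ locus $S_k \subset \mathcal{Q}_n$, with the first projection birational onto $S_k$; this recovers the classical codimension $\binom{k+1}{2}$.

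For the first claim, for each line $L$ the set $W_L \subset \mathcal{Q}_n$ of quadrics containing $L$ is a linear subspace of codimension $3$. First observe that for a generic $L$ and a generic $V \in \GG(k-1,n)$ one has $V \cap L = \emptyset$ (this being a proper Schubert condition), so $L$ projects to a well-defined line $\pi_V(L) \subset \PP^n/V$, and ``$(Q,V) \in \widetilde S_k$ and $Q \supset L$'' becomes a fibre-wise linear condition of codimension $3$ on the projective bundle. Kleiman's transversality theorem, applied on the fibre $\mathcal{Q}_{n-k}$ via the transitive $\operatorname{PGL}_{n-k+1}$-action on lines in $\PP^{n-k}$, then shows that for generic lines $L_1,\ldots,L_N$ the combined $N$ conditions are transverse and cut codimension $3N$ on each fibre; pushing down to $\mathcal{Q}_n$ gives the expected codimension for $\Lambda(k,N,n)$, or emptiness if this codimension would exceed $\binom{n+2}{2}-1$.

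For the second claim, the condition ``the vertex meets a further general line $L_0$'' is the pullback of a Schubert subvariety $\Sigma_{L_0} \subset \GG(k-1,n)$ of the appropriate codimension, computed by the standard incidence-correspondence argument for $(k-1)$-planes meeting a line. This is a purely vertical condition on the bundle $\widetilde S_k$. Since the line-containment conditions of the first part are horizontal (they act fibre-wise after the projections $\pi_V(L_i)$), the two sets of conditions cut in independent directions, so the total codimension in $\widetilde S_k$ is the sum, which translates back to $\mathcal{Q}_n$ as the claimed $\epsilon(k,N,n)$.

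The main obstacle is establishing genuine transversality: because $S_k$ is singular along the deeper corank strata, Kleiman's theorem does not apply directly to $\mathcal{Q}_n$, and one must pass to the smooth resolution $\widetilde S_k$ where the conditions can be handled cleanly via the bundle structure. A further delicate point is that ``$Q \supset L$'' becomes a strictly weaker linear condition when $L$ meets the vertex of $Q$, so one has to verify that for generic data this coincidence does not occur; this follows from the properness of the Schubert conditions ``$V \cap L_i \ne \emptyset$'' in $\GG(k-1,n)$ and the use of Bertini/Kleiman to avoid them simultaneously for all $i$.
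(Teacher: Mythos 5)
Your set-up (the resolution $\widetilde S_k\to\GG(k-1,n)$ and the fibre-wise codimension-$3$ count for a single line) is sound and close in spirit to the paper's argument, which likewise computes the codimension of the single-line condition by projecting from the vertex. The gap is in how you combine the $N$ conditions. Kleiman's theorem gives transversality after translating one subvariety by a \emph{general} element of a group acting transitively on the ambient variety; applied inside a single fibre it only says that a configuration of lines that is general \emph{in that fibre's} $\PP^{n-k}$ imposes independent conditions, and the required genericity depends on $V$. But the lines $\pi_V(L_1),\dots,\pi_V(L_N)$ in the various fibres are all induced by one choice of $L_1,\dots,L_N\subset\PP^n$: you cannot move them independently fibre by fibre, so fibre-wise Kleiman does not rule out a positive-dimensional locus of $V\in\GG(k-1,n)$ over which the projected lines are in special position (some $L_i$ meeting $V$, the projections lying on a common quadric, etc.) and the fibre-wise codimension drops below $3N$. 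Indeed ``codimension $3N$ on each fibre'' is simply false over such $V$. To conclude you would have to stratify the base by the excess and check that the codimension of each stratum in $\GG(k-1,n)$ compensates for the drop over it --- an argument you have not supplied.

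Relatedly, your treatment of the degeneration ``$L_i$ meets the vertex'' is not correct: you propose to choose the data so that ``this coincidence does not occur,'' but the Schubert variety $\{V: V\cap L_i\neq\emptyset\}$ is a nonempty subvariety of $\GG(k-1,n)$ for \emph{every} choice of $L_i$, so the coincidence cannot be avoided; its contribution has to be estimated. This is exactly what the paper does: over that locus the containment condition degenerates to a single linear condition on the fibre, but the locus itself has codimension at least $2$ in the base in the range needed, so the total codimension is still at least $3$. The paper also sidesteps the fibre-wise transversality problem altogether by noting that the exact-corank stratum $R_k$ is a single $PGL(n+1)$-orbit --- in particular smooth and homogeneous, so your stated reason for introducing the resolution (singularity of the closure $S_k$) does not apply to the relevant open stratum --- and applying Kleiman once on $R_k$ with the full $PGL(n+1)$-action to intersect the subvarieties $\lambda(L_1),\dots,\lambda(L_N)$ transversally. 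If you restrict your bundle to the locus where $\operatorname{Sing}(Q)$ equals $V$ you recover precisely this homogeneous space, at which point the resolution adds nothing; as written, your proof has a genuine gap at the step where the $N$ conditions are declared independent.
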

In particular, with $n=5$, $N=4$ and $k=2$, we see that the locus $\Lambda_v(2,4,5)$ of quadrics in $\PP^5$ of corank $2$ containing 4 general lines and with vertex intersecting another general line has dimension
\begin{align*}
  \binom{5+2}{2} -1 - \epsilon(2,4,5) & = 20-3 \cdot 4-\binom{3}{2}-3 = 2
\end{align*}
as claimed in the proof of Theorem \ref{theorem-planesinP5}.
\begin{proof}
  For $1\leq i \leq N$, let $\Lambda(L_i)$ denote the set of quadrics in $\PP^n$ that contain the $i$-th line $L_i$, and let $\lambda(L_i)$ denote the intersection of $\Lambda(L_i)$ with the set $R_k$ of quadrics of corank $k$. Then $\lambda(L_i)$ has codimension 3 in $R_k$. To see this, one can for example fix the vertex $l$ and project away $\pi_l: \PP^n \dashrightarrow \PP^{n-k}$: quadrics with vertex $l$ and containing $L_i$ then correspond to smooth quadrics in $\PP^{n-k}$ containing $\pi_l(L_i)$. If $L_i$ is disjoint from $l$, this clearly gives a set of codimension 3. Varying $l$ among all linear spaces disjoint from $L_i$, we then get a subset of codimension 3 in $R_k$. If $L_i$ intersects $l$, then $\pi_l(L_i)$ is a point, so we get one condition on the smooth quadrics; however, for $n \geq 4$, the condition for $l$ to intersect a fixed line imposes $n-2 \geq 2$ conditions, and so we get codimension at least 3 in this case too. 
  
  For any $k$, the group $PGL(n+1)$ acts transitively on $R_k$ and maps $\lambda(L_i)$ to $\lambda(L'_i)$ for some other line $L'_i$ in $\PP^5$. For each $i$, we can apply Kleiman's transversality theorem \cite{Kle} to each component of $\lambda(L_i)$ to find a Zariski-open subset of $PGL(n+1)$ that moves the component into proper position relative to $\bigcap_{1 \leq j<i} \lambda(L_j)$. Intersecting these open subsets, we get a nonempty subset of elements moving every component of $\lambda(L_i)$ into proper position relative to $\bigcap_{1 \leq j<i} \lambda(L_j)$, and therefore the intersection $\lambda(L'_i) \, \cap \, \bigcap_{j<i} \lambda(L'_j)$ has the expected codimension $\binom{k+1}{2} + 3i$. Putting $i=n$, we get the claimed codimension $e(k,N,n)$ of $\Lambda(k,N,n)$.

  To prove the claimed codimension $\epsilon(k,N,n)$ of $\Lambda_v(k,N,n)$, for a line $L$ we write $\lambda_v(L)$ to denote the set of quadrics in $R_k$ whose vertex intersects $L$. Then $\lambda_v(L)$ has codimension $n-k-1$ in $R_k$, as one sees again by projection away from the vertex. Then the same argument as in the previous paragraph applies again to show that the codimension of $\Lambda_v(k,N,n)$ in $\Lambda(k,N,n)$ is $n-k-1$.
\end{proof}

\section{\texorpdfstring{2-cycles on $X^4_r$ for $r \leq 4$}{2-cycles on X\^{}4\_r for r at most 4}}
In the next two sections, we will prove our main results about linear generation of cones of cycles. We begin with the case of lines in $\PP^4$. In this case, $N_2(X^4_r)=N^2(X^4_r)$ has a basis consisting of the classes
\begin{align*}
H^2, \quad F_i := HE_i, \quad G_i := \ -E_i^2 \quad (i=1,\ldots r)
\end{align*}
where we have chosen signs so that effective classes in the exceptional divisors have positive coefficients with respect to the basis. 

The intersections among these classes are given by the following matrix:
\begin{equation*}
\begin{tikzpicture}
\matrix[table] {%
  {}& H^2 & F_i & G_i\\
  H^2 & 1 & 0 & 0 \\
  F_j & 0 & 0 & -\delta_{ij} \\
  G_j & 0 & -\delta_{ij} & \mskip4mu 3\delta_{ij}\\
};
\end{tikzpicture}
\end{equation*}
Using Corollary \ref{corollary-classes}, we can write down all the classes of linear subvarieties in $X^4_4$. The linear cone $\Lin_2(X^4_4)$ is then generated by the following list of classes, in which (as explained in Section \ref{sect-comp}) we list generators up to permutations of indices:
\begin{equation*}
\def\m{\phantom{-}}%
\begin{tikzpicture} 
\matrix[table,column 1/.style={nodes={minimum width=0pt,execute at begin node=\text{\strut}}}] {%
  {}& H^2    &F_1  &F_2  & F_3 & F_4 & G_1 & G_2 & G_3 & G_4 \\
  {}& 0&\m1\;&\m0\;&\m0\;&\m0\;&\m0\;& 0 & 0 & 0\\
  {}& 0&\m1\;&\m0\;&\m0\;&\m0\;&\m1\;& 0 & 0 & 0\\
  {}& 1&\m0\;&\m0\;&\m0\;&\m0\;&\m0\;& 0 & 0 & 0\\
  {}& 1& -1\;& -1\;& -1\;& -1\;&\m0\;& 0 & 0 & 0\\
  {}& 1& -2\;& -1\;& -1\;&\m0\;& -1\;& 0 & 0 & 0\\
};
\end{tikzpicture}
\end{equation*}
Before stating our main result on linear generation, we record one fact that will save work when verifying that certain classes are nef. 
\begin{lemma} \label{lemma-maximality}
  Let $\alpha \in N^k(X^n_r)$ be a nef class. Let $\beta$ be any class of the form $\beta = \alpha + \sum_i \num{Z_i}$, where $\{Z_i\}$ are subvarieties of $X^n_r$ contained in exceptional divisors. If $\beta$ is contained in $\Lin^*_k(X^n_r)$, then $\beta$ is also nef.
\end{lemma}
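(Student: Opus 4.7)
The plan is to verify directly that $\beta \cdot [\gamma] \geq 0$ for every irreducible $k$-dimensional subvariety $\gamma \subset X^n_r$, splitting into two cases according to whether $\gamma$ lies in an exceptional divisor.

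First, if $\gamma$ is contained in some exceptional divisor $E$ (which is either $\PP^{n-1}$ or $\PP^1 \times \PP^{n-2}$, hence toric), then Proposition \ref{prop-toriccones} applied inside $E$ writes $[\gamma]$ in $N_k(E)$ as a non-negative combination of classes of torus-invariant $k$-subvarieties of $E$. These are linear subvarieties of $X^n_r$ in the sense of Section \ref{sect-cones}, so pushing the decomposition forward to $N_k(X^n_r)$ and pairing with $\beta$, the hypothesis $\beta \in \Lin^*_k(X^n_r)$ gives $\beta \cdot [\gamma] \geq 0$ term by term.

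If instead $\gamma$ is not contained in any exceptional divisor, I would expand $\beta \cdot [\gamma] = \alpha \cdot [\gamma] + \sum_i [Z_i] \cdot [\gamma]$; the first summand is non-negative since $\alpha$ is nef, so it remains to prove $[Z_i] \cdot [\gamma] \geq 0$ for each $i$. Let $E$ be the exceptional divisor containing $Z_i$. Since $\gamma$ is irreducible and not in $E$, it meets $E$ properly in an effective $(k-1)$-cycle, and the projection formula for the inclusion $\iota\colon E \hookrightarrow X^n_r$ identifies $[Z_i] \cdot [\gamma]$ with $[Z_i]_E \cdot [\gamma \cap E]$ computed inside $E$. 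Applying Proposition \ref{prop-toriccones} once more to the toric variety $E$, I can write $[\gamma \cap E] = \sum_m t_m [\mu_m]$ with $t_m \geq 0$ and $\mu_m$ linear $(k-1)$-subvarieties of $E$. Since $\operatorname{Aut}(E)$ acts transitively on linear subvarieties of each fixed type, each $\mu_m$ can be moved within its numerical class to a representative $\mu'_m$ meeting $Z_i$ in a finite set. As $[\mu'_m]$ and $[Z_i]_E$ have complementary dimensions in the smooth projective variety $E$, positivity of proper intersection from Section \ref{subsection-int} yields $[Z_i]_E \cdot [\mu_m] = [Z_i]_E \cdot [\mu'_m] \geq 0$, and summing over $m$ and $i$ completes the argument.

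The step I expect to require the most care is the projection-formula identification $[Z_i]\cdot[\gamma] = [Z_i]_E \cdot [\gamma \cap E]$, which relies on knowing that the refined pullback $\iota^*[\gamma]$ agrees with the effective cycle $[\gamma \cap E]$ with no excess-intersection correction; this holds precisely because $\gamma$ is irreducible and meets the divisor $E$ properly. Once this identification is in hand, everything else is a clean application of the toric structure of the exceptional divisors together with the elementary positivity of proper intersection.
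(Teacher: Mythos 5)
Your proof is correct and follows essentially the same route as the paper: handle subvarieties inside an exceptional divisor via the toric structure and the hypothesis $\beta \in \Lin_k^*(X^n_r)$, and for subvarieties not contained in any $E_j$ reduce $[Z_i]\cdot[\gamma]$ to an intersection of effective classes of complementary dimension inside $E_j \cong \PP^1\times\PP^{n-2}$. The only difference is that you justify the non-negativity of that last intersection explicitly (via homogeneity of $E_j$ and moving a linear representative into proper position), whereas the paper simply asserts that effective cycles on $\PP^1\times\PP^{n-2}$ intersect effectively.
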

\begin{proof}
 We must show that for every irreducible subvariety $S$ of dimension $k$ in $X^n_r$, we have $\beta \cdot \num{S} \geq 0$. 
 
  If $S \subset E_j$ for some $j$, then since $E_j$ is toric, we have $\num{S} \in \Lin_k(E_j) \subset \Lin_k(X^n_r)$, and hence by hypothesis, $\beta \cdot \num{S} \geq 0$. 
 
 If $S$ is not contained in any exceptional divisor $E_j$, then it intersects each $E_j$ either in the empty set or a in set of dimension $k-1$. If $S \cap E_j$ is non-empty and $Z_i \subset E_j$ is one of the subvarieties appearing in $\beta$, then we can compute $\num{S} \cdot \num{Z_i}$ as $(\num{S \cap E_j} \cdot \num{Z_i})_{E_j}$, where the subscript indicates that the intersection is considered in the ambient space $E_j$. Since $E_j \iso \PP^1 \times \PP^{n-2}$, the intersection of any two effective cycles is again effective, so $\num{S} \cdot \num{Z_i} = (\num{S \cap E_j} \cdot \num{Z_i})_{E_j} \geq 0$. Since $\alpha$ is nef, we conclude that $\beta \cdot \num{S} \geq 0$, as required.
 \end{proof}

\begin{theorem} \label{theorem-2cyclesP4}
The effective cone of 2-cycles $\Eff_2(X^4_r)$ is linearly generated if and only if  $r \leq 4$. 
\end{theorem}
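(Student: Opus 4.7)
By the monotonicity noted in Section \ref{sect-cones}, linear generation of $\Eff_2(X^4_4)$ implies linear generation of $\Eff_2(X^4_r)$ for all $r \leq 4$. Combined with the fact (Proposition \ref{prop-toriccones}) that $X^4_r$ is toric for $r \leq 2$, it is enough to prove linear generation for $r = 4$ and failure of linear generation for $r = 5$. Since $\Lin_2 \subseteq \Eff_2$ is tautological, for the forward direction I would argue by duality: $\Nef^2 = \Eff_2^*$ gives $\Nef^2 \subseteq \Lin_2^*$, and since $\Lin_2(X^4_4)$ is polyhedral (hence closed and convex), biduality yields $\Lin_2 = \Lin_2^{**} \subseteq (\Nef^2)^* = \Eff_2$. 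Equality therefore reduces to verifying the reverse inclusion $\Lin_2^*(X^4_4) \subseteq \Nef^2(X^4_4)$.

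To carry this out, I would first feed the five-generator description of $\Lin_2(X^4_4)$ displayed just before the theorem into Normaliz, obtaining an explicit finite list of generators of $\Lin_2^*(X^4_4)$ (modulo the natural $S_4$-action on the line indices). For each such generator $\beta$, I would then certify $\beta \in \Nef^2(X^4_4)$ by one of three routes: \emph{(i)} recognise $\beta$ as a product $D \cdot D'$ of two nef divisor classes and invoke Lemma \ref{lemma-nefintersections}---this should handle $H^2$ (as $H \cdot H$) and $F_i = H E_i$ at once; \emph{(ii)} recognise $\beta$ as the class $[\widetilde{L^4_4}]$ of the proper transform of a generic codimension-2 linear space meeting all four lines, which is nef by Theorem \ref{theorem-planesinP4}; \emph{(iii)} decompose $\beta = [\widetilde{L^4_4}] + \sum_i [Z_i]$ with each $Z_i$ a subvariety of some exceptional divisor $E_{j_i}$, and invoke Lemma \ref{lemma-maximality}---which applies because $\beta$ automatically lies in $\Lin_2^*(X^4_4)$ by choice. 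To find the decompositions needed in (iii), I would let the indices of the $[Z_i]$'s be unknowns in a small linear system determined by $\beta - [\widetilde{L^4_4}]$ and solve for non-negative integer coefficients.

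For the reverse direction, monotonicity reduces the problem to exhibiting a single effective 2-cycle on $X^4_5$ whose class lies outside $\Lin_2(X^4_5)$. A natural candidate is the proper transform $\widetilde{Q}$ of a quadric surface $Q \subset \operatorname{Span}(L_i, L_j)$ passing through two of the blown-up lines $L_i, L_j$ and through the residual points $L_k \cap \operatorname{Span}(L_i, L_j)$ for $k \neq i, j$; by Corollary \ref{corollary-classes} its class is a specific combination of $H^2$, the $F_\ell$'s, and $G_i, G_j$. I would then use Normaliz once more to compute generators of $\Lin_2^*(X^4_5)$ and display a single generator $\gamma$ with $\gamma \cdot [\widetilde{Q}] < 0$, which certifies that $[\widetilde{Q}] \notin \Lin_2(X^4_5)$ and therefore that the effective cone contains classes outside the linear cone. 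If the quadric candidate fails to produce a class outside $\Lin_2$, other natural candidates (such as the proper transform of a cubic scroll through four of the lines) should be tried in the same way.

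The main obstacle I anticipate is step \emph{(iii)} of the forward direction: after the Normaliz computation, some generators of $\Lin_2^*(X^4_4)$ may not admit an obvious decomposition as ``generic nef plane class plus effective exceptional corrections,'' and one may need to rewrite them using non-generic plane classes (planes containing a line, spanned by two lines, etc.) for which extra work is required to see effectivity of the residual exceptional terms. Identifying the correct witness surface for $r = 5$ is a secondary but smaller concern, since even without a geometric guess the Normaliz-produced generators of $\Lin_2^*(X^4_5)$ point directly to the hyperplane separating $\Lin_2(X^4_5)$ from any hypothetical extra extremal class.
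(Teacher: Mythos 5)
Your overall architecture (reduce to $r=4$ and $r=5$, compute $\Lin_2^\dual$ with Normaliz, certify each dual generator nef, exhibit a quadric surface in $\operatorname{Span}(L_i,L_j)$ as the witness for $r=5$) matches the paper, and your $r=5$ witness is exactly the one used there. But there is a genuine gap in the forward direction, precisely at the point you flag as ``the main obstacle'': your three certification routes do not cover all generators of $\Lin_2(X^4_4)^\dual$. The actual dual cone contains generators such as $\lambda = 3H^2-2F_1-2F_2-2F_3-F_4-G_1-G_2-G_3$ and $\xi = 4H^2-3\sum_i F_i-\sum_i G_i$. These have $H^2$-coefficient greater than $1$, so they cannot be written as $[\widetilde{L^4_4}]+\sum_i[Z_i]$ with the $Z_i$ exceptional (route (iii)); they are not products of nef divisors (route (i) also misfires on $F_i=HE_i$, since $E_i$ is not nef --- though $F_i$ is not a dual generator anyway); and they are not the class of a single linear space (route (ii)). Your suggested fix --- rewriting via non-generic planes --- is the right instinct, but the difficulty is not ``effectivity of the residual exceptional terms'': the summands one is forced to use, namely classes $\pi_{ij}$ of planes containing $L_i$ and meeting $L_j$, are effective but \emph{not nef}, so a decomposition into them proves nothing by itself.

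The missing idea is the following two-step argument. First, $\pi_{ij}$ is a nef divisor \emph{inside} the proper transform $\widetilde{H}$ of $\operatorname{Span}(L_i,L_j)$, which is isomorphic to $X^3_{2,2}$; hence any irreducible surface meeting $\pi_{ij}$ negatively must be contained in $\widetilde{H}$. Second, $\Eff_2(X^3_{2,2})$ is linearly generated (this is Lemma \ref{lemma-appendix1}, proved separately in the appendix), so such a surface has linear class, and $\lambda$, $\xi$ pair non-negatively with it simply because they lie in $\Lin_2^\dual$. Without Lemma \ref{lemma-appendix1} and this containment argument, the nefness of $\lambda,\mu,\nu,\xi$ --- and hence the inclusion $\Lin_2^\dual\subseteq\Nef^2$ on which your whole duality argument rests --- is not established. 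The rest of your proposal (the biduality reduction, the handling of the generators with $H^2$-coefficient $1$ via Theorem \ref{theorem-planesinP4} and Lemma \ref{lemma-maximality}, and the non-linearity of $[\widetilde{Q}]=2H^2-3F_1-3F_2-F_3-F_4-F_5-G_1-G_2$ for $r=5$) is sound.
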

\begin{proof}
As explained in Section \ref{sect-cones}, to prove linear generation it is enough to consider the case $r=4$. Our strategy is to use the list of linear classes above to compute generators for the dual of the linear cone $\Lin_2(X^4_4)^\dual$ and verify that the generators are indeed nef classes. The generators of $\Lin_2(X^4_4)^\dual$ are as follows:
\begin{equation*}
\def\m{\phantom{-}}%
\begin{tikzpicture}
\matrix[table] {%
  {}          & H^2 & F_1 & F_2 & F_3 & F_4 & G_1 & G_2 & G_3 & G_4\\
  \alpha      & 1   &\m0\;&\m0\;&\m0\;&\m0\;&\m0\;&\m0\;&\m0\;&\m0\;\\
  \beta       & 1   & -1\;&\m0\;&\m0\;&\m0\;&\m0\;&\m0\;&\m0\;&\m0\;\\
  \gamma      & 1   & -1\;& -1\;&\m0\;&\m0\;&\m0\;&\m0\;&\m0\;&\m0\;\\
  \delta      & 1   & -1\;& -1\;& -1\;&\m0\;&\m0\;&\m0\;&\m0\;&\m0\;\\
  \varepsilon & 1   & -1\;& -1\;& -1\;& -1\;&\m0\;&\m0\;&\m0\;&\m0\;\\
  \hline
  \pi         & 1   & -2\;&\m0\;&\m0\;&\m0\;& -1\;&\m0\;&\m0\;&\m0\;\\
  \hline
  \lambda     & 3   & -2\;& -2\;& -2\;& -1\;& -1\;& -1\;& -1\;&\m0\;\\
  \mu         & 4   & -3\;& -3\;& -2\;& -2\;& -1\;& -1\;& -1\;& -1\;\\
  \nu         & 4   & -3\;& -3\;& -3\;& -2\;& -1\;& -1\;& -1\;& -1\;\\
  \xi         & 4   & -3\;& -3\;& -3\;& -3\;& -1\;& -1\;& -1\;& -1\;\\
};
\end{tikzpicture}
\end{equation*}
The class $\varepsilon$ is represented by the proper transform of a 2-dimensional linear subspace in $\PP^4$ intersecting all 4 lines, hence it is nef by Theorem \ref{theorem-planesinP4}. Lemma \ref{lemma-maximality} then implies that the classes $\alpha$ to $\delta$ are also nef.

The class $\pi$ is pulled back from a class $\pi^\prime$ on the toric variety $X^4_1$. It is straightforward to check that $\pi^\prime$ is in the cone $\Lin_2^\dual (X^4_1)$, so is nef by Proposition \ref{prop-toriccones}, and therefore by Proposition \ref{prop-pushpull}, the class $\pi$ is nef too. 

It remains to deal with the classes $\lambda$ to $\xi$. Again, by Lemma \ref{lemma-maximality}, it is enough to show that $\lambda$ and $\xi$ are nef.

To show that $\lambda$ and $\xi$ are nef classes, we will decompose them into effective classes and analyse the summands geometrically. In each table below, the rows sum up to the class in the top-left corner. The symbol $\pi_{ij}$ denotes the class of the proper transform of a plane containing $L_i$ and intersecting $L_j$, while $\gamma_k$ denotes the class of the proper transform of a plane containing $L_i$. 
\begin{equation*}
\def\m{\phantom{-}}%
\begin{tikzpicture}
\matrix[table] {%
    \lambda  & H^2 & F_1 & F_2 & F_3 & F_4 & G_1 & G_2 & G_3 & G_4 \\
    \pi_{14} & 1   & -2\;&\m0\;&\m0\;& -1\;& -1\;&\m0\;&\m0\;& 0 \\
    \gamma_2 & 1   &\m0\;& -2\;&\m0\;&\m0\;&\m0\;& -1\;&\m0\;& 0 \\
    \gamma_3 & 1   &\m0\;&\m0\;& -2\;&\m0\;&\m0\;&\m0\;& -1\;& 0 \\
};
\end{tikzpicture}
\end{equation*}
\par
\begin{equation*}
\def\m{\phantom{-}}%
\begin{tikzpicture}
\matrix[table] {%
    \xi      & H^2 & F_1 & F_2 & F_3 & F_4 & G_1 & G_2 & G_3 & G_4 \\
    \pi_{12} & 1   & -2\;& -1\;&\m0\;&\m0\;& -1\;&\m0\;&\m0\;&\m0\;\\
    \pi_{21} & 1   & -1\;& -2\;&\m0\;&\m0\;&\m0\;& -1\;&\m0\;&\m0\;\\
    \pi_{34} & 1   &\m0\;&\m0\;& -2\;& -1\;&\m0\;&\m0\;& -1\;&\m0\;\\
    \pi_{43} & 1   &\m0\;&\m0\;& -1\;& -2\;&\m0\;&\m0\;&\m0\;& -1\;\\
};
\end{tikzpicture}
\end{equation*}
We have already noted that the classes $\gamma_2$ and $\gamma_3$ are nef. The classes $\pi_{ij}$ are not nef, but we will show that any surface intersecting a class $\pi_{ij}$ from the above tables negatively must nevertheless have non-negative intersection with $\lambda$ and $\xi$.

For convenience, let us consider $\pi_{12}$; other cases are identical. Let $H=\operatorname{Span}(L_1,L_2)$, and let $\widetilde{H}$ be the proper transform of $H$ on $X^4_4$. By generality, the lines $L_3$ and $L_4$ each intersect $H$ in a point, and so $\widetilde{H} \iso X^3_{2,2}$. Now, $\pi_{12}$ is a divisor inside $\widetilde{H}$, and by Lemma \ref{lemma-appendix1}, it is nef. Therefore if $Z \subset X^4_4$ is an irreducible surface that is not contained in $\widetilde{H}$, we have $Z \cdot \pi_{12} >0$. On the other hand, if $Z$ is contained in $\widetilde{H}$, then we know that $Z$ is linear by Lemma \ref{lemma-appendix1}. Since $\lambda$ and $\xi$ are both in the dual of the linear cone $\operatorname{Lin}_2(X^4_4)$, they must both have non-negative intersection with $Z$.

Finally, to prove that linear generation does not hold for $r \geq 5$, it is enough to consider the case $r=5$. Choose any linear subspace spanned by two of the lines, say $H=\operatorname{Span}(L_1,L_2)$. The other 3 lines intersect $H$ in 3 points $p_3, p_4, p_5$. Counting dimensions, there is a quadric surface $Q$ inside $H$ containing the lines $L_1$ and $L_2$ and the points $p_3, p_4, p_5$. Blowing up, Corollary \ref{corollary-classes} tells us that the class of the proper transform of $Q$ on $X^4_5$ is
\begin{align*}
  \num{\tilde{Q}}&=2H^2-3F_1-3F_2-F_3-F_4-F_5-G_1-G_2
\end{align*}
  and it is straightforward to check that this is not in the linear cone $\Lin_2(X^4_5)$. 
\end{proof}

\section{\texorpdfstring{2-cycles on $X^5_r$ for $r \leq 5$}{2-cycles on X\^{}5\_r for r at most 5}}
The space $N^2(X^5_r)$ has a basis consisting of the classes
\begin{align*}
H^2, \quad F_i := H E_i \quad (i=1,\ldots,r), \quad G_i :=  -E_i^2 \quad (i=1, \ldots r)
\end{align*}
and the space $N_2(X^5_r)$ has a basis consisting of the classes
\begin{align*}
 H^3, \quad f_i := -HE_i^2 \quad (i=1,\ldots,r), \quad g_i := E_i^3 \quad (i=1, \ldots r)
\end{align*}
where, again, signs are chosen so that effective cycles in exceptional divisors have positive coefficients in the basis.

The intersections among these are as follows:
\begin{equation*}
\begin{tikzpicture}
\matrix[table] {%
  {} & H^2 & F_i & G_i\\
  H^3 & 1 & 0 & 0\\
  f_j & 0 & 0 & -\delta_{ij}\\
  g_j & 0 & -\delta_{ij} & 4\delta_{ij}\\
};
\end{tikzpicture}
\end{equation*}
The linear cone $\Lin_2(X^5_5)$ is then generated by the following classes:
\begin{equation*}
\def\m{\phantom{-}}%
\begin{tikzpicture}
\matrix[table,column 1/.style={nodes={minimum width=0pt,execute at begin node=\text{\strut}}}] {%
  {} & H^3 & f_1 & f_2 & f_3 & f_4 & f_5 & g_1 & g_2 & g_3 & g_4 & g_5\\
  {} & 0   &\m1\;&\m0\;&\m0\;&\m0\;& 0   &\m0\;& 0   & 0   & 0   & 0\\
  {} & 0   &\m1\;&\m0\;&\m0\;&\m0\;& 0   &\m1\;& 0   & 0   & 0   & 0\\
  {} & 1   &\m0\;&\m0\;&\m0\;&\m0\;& 0   &\m0\;& 0   & 0   & 0   & 0\\
  {} & 1   & -1\;& -1\;& -1\;& -1\;& 0   &\m0\;& 0   & 0   & 0   & 0\\
  {} & 1   & -2\;& -1\;& -1\;&\m0\;& 0   & -1\;& 0   & 0   & 0   & 0\\
};
\end{tikzpicture}
\end{equation*}
We can now prove our second main result.
\begin{theorem} \label{theorem-2cyclesP5}
  The cone of effective 2-cycles $\Eff_2(X^5_r)$ is linearly generated for $r \leq 5$. 
\end{theorem}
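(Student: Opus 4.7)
My plan is to follow exactly the template established by the proof of Theorem \ref{theorem-2cyclesP4}. By the observation at the end of Section \ref{sect-cones}, it suffices to prove the result for $r=5$, since the effective cone on $X^5_r$ is the image of $\Eff_2(X^5_5)$ under the blowup map for any smaller $r$, and the corresponding statement holds for the linear cone. The strategy is then to use Macaulay2 with the \texttt{Normaliz} package to enumerate a generating set for the dual cone $\Lin_2(X^5_5)^\dual$, and to verify that each listed generator is represented by a nef class in $N^2(X^5_5)$. Since nefness and membership in $\Lin_2^\dual$ are both tested against the same finite list of linear classes, this will prove $\Eff_2(X^5_5)=\Lin_2(X^5_5)$.

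After computing the dual, I expect the generators to split into three natural groups. First, the generators that are pulled back from the toric variety $X^5_{r'}$ for $r'\leq 3$: these are nef by Propositions \ref{prop-toriccones} and \ref{prop-pushpull}(a). Second, the class represented by the proper transform of a codimension 2 linear space meeting all 5 lines, together with any class obtainable from it by adding subvarieties contained in exceptional divisors, all of which are nef by Theorem \ref{theorem-planesinP5} combined with Lemma \ref{lemma-maximality}. These two groups should dispose of the bulk of the generators with essentially no further work.

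The third group is the difficult one, consisting of generators not immediately dominated by the classes handled above. Here I would imitate the trick used for $\lambda$ and $\xi$ in Theorem \ref{theorem-2cyclesP4}: decompose each such class as a positive sum of classes $\pi_{ij}$ (proper transforms of planes containing $L_i$ and meeting $L_j$) together with genuinely nef pieces. The class $\pi_{ij}$ is itself a divisor inside the proper transform $\widetilde{H}$ of the hyperplane $H=\operatorname{Span}(L_i,L_j)$, and since the remaining 3 lines meet $H$ in 3 general points, $\widetilde{H}\cong X^4_{2,3}$. The point is that $\pi_{ij}$ is nef on $\widetilde{H}$ (an appeal to the appendix-style lemma invoked in Theorem \ref{theorem-2cyclesP4}), so any irreducible surface $Z\subset X^5_5$ not contained in $\widetilde{H}$ has $\pi_{ij}\cdot[Z]\geq 0$; and if $Z\subset\widetilde{H}$, then $Z$ is linear (again by the analogue of the appendix lemma applied inside $\widetilde{H}$), hence pairs non-negatively against anything in $\Lin_2(X^5_5)^\dual$. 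This dichotomy transfers the required non-negativity from the sum to the whole class.

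The main obstacle will be organizational: choosing the decompositions so that the cancellation of exceptional-divisor coefficients works out exactly, and so that the decomposition of each hard generator uses only $\pi_{ij}$'s of the right indices plus remainders already known to be nef from the first two groups. In $\PP^4$ there were essentially only two truly hard generators ($\lambda$ and $\xi$); on $X^5_5$ the numerics are richer, so I expect several hard generators (related by the $S_5$-action permuting the lines) each needing its own decomposition table analogous to the $\lambda$- and $\xi$-tables in the previous section. Once the decompositions are recorded, the verification of each is routine coefficient comparison. If any hard generator resists such a decomposition, the fallback is to restrict to an appropriate $\widetilde{\operatorname{Span}(L_i,L_j,L_k)}$ and use toricness there; but I anticipate the two-line-span trick to suffice, as it did for $\PP^4$.
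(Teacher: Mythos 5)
Your overall architecture is the right one and matches the paper on the easy generators: reduce to $r=5$, compute $\Lin_2(X^5_5)^\dual$ with {\tt Normaliz}, and dispose of the toric pullbacks and of the class $H^2-\sum_i F_i$ of a codimension-2 linear space meeting all five lines via Theorem \ref{theorem-planesinP5} and Lemma \ref{lemma-maximality}. But your plan for the remaining generators has two concrete problems. First, a dimensional slip: in $\PP^5$ the span $\operatorname{Span}(L_i,L_j)$ of two general lines is a $\PP^3$ (whose proper transform is the toric $X^3_2$), not a hyperplane, so a codimension-2 linear space containing $L_i$ and meeting $L_j$ is not a divisor inside any canonical $\widetilde{H}\cong X^4_{2,3}$ --- it lies in a pencil of hyperplanes. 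The clean dichotomy you describe (``either $Z\not\subset\widetilde H$ and the intersection is non-negative, or $Z\subset\widetilde H$ and $Z$ is linear'') therefore does not apply verbatim; the paper instead sweeps the surface out by the pencil of hyperplanes containing $L_1$ and a point of $L_5$, and uses a base-locus argument to trap $Z$ inside $\operatorname{Span}(L_1,L_5)$. Moreover the decompositions of the harder generators require, besides linear pieces, the class of a quadric threefold in $X^4_{2,3}$ through two lines and three points, whose nefness comes from Lemma \ref{prop-appendix2}; linear $\pi_{ij}$'s alone do not suffice.

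Second, and more seriously, your method has no way to handle the extremal generator $\varepsilon=4H^2-4\sum_{i=1}^5 F_i-\sum_{i=1}^5 G_i$. A coefficient count shows it cannot be a positive combination of the classes you propose: four codimension-2 linear spaces each containing one line contribute total $F$-coefficient at most $16$ and $G$-coefficient $4$, against the required $20$ and $5$. The paper handles $\varepsilon$ by an entirely different device: it equals $D^2$ for $D=2H-\sum_i E_i$, the class of the proper transform of a quadric through all five lines; nefness of $D$ is established by a Macaulay2 computation showing that the base locus of the system of such quadrics is exactly $\bigcup_i L_i$, and then Lemma \ref{lemma-nefintersections} (a product of nef divisors is nef in $N^2$) finishes. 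Without this nef-divisor-squared step, or some substitute for it, your proof does not close.
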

\begin{proof}
  As before, we compute the classes generating $\Lin_2(X^5_5)^\dual$. To avoid an extremely long list, let us say that a subset $\{v_1,\ldots,v_n\}$ of the full set of generators of $\Lin_2(X^5_5)^\dual$ is {\it maximally incident} if every generator can be written in the form $v=v_i+\sum_j a_j F_j + \sum_k b_k (F_k+G_k)$ for some positive integers $a_j$ and $b_k$. Using Lemma \ref{lemma-maximality}, it is sufficient to show that all generators in a maximally incident set are nef. A maximally incident set of generators for $\Lin_2(X^5_5)^\dual$ is as follows:
\begin{equation*}
\def\m{\phantom{-}}%
\begin{tikzpicture}
\matrix[table] {%
  {}          & H^2 & F_1 & F_2 & F_3 & F_4 & F_5 & G_1 & G_2 & G_3 & G_4 & G_5\\
  \alpha      & 1   & -2  &\m0\;&\m0\;&\m0\;&\m0\;& -1\;&\m0\;&\m0\;&\m0\;&\m0\;\\
  \beta       & 1   & -1  & -1\;& -1\;& -1\;& -1\;&\m0\;&\m0\;&\m0\;&\m0\;&\m0\;\\
  \gamma      & 2   & -2  & -2\;& -1\;& -1\;& -1\;& -1\;& -1\;&\m0\;&\m0\;&\m0\;\\
  \delta      & 3   & -3  & -3\;& -3\;& -2\;& -2\;& -1\;& -1\;& -1\;&\m0\;&\m0\;\\
  \varepsilon & 4   & -4  & -4\;& -4\;& -4\;& -4\;& -1\;& -1\;& -1\;& -1\;& -1\;\\
};
\end{tikzpicture}
\end{equation*}
Let us prove that each of these classes is nef:
\begin{itemize}
\item $\alpha$: this class is pulled back from a class $\widetilde{\alpha}$ on the toric variety $X^5_1$. Since the effective cones of toric varieties are linearly generated, $\widetilde{\alpha}$ is nef, hence so too is $\alpha$. 
  
\item $\beta$: this is the class of a codimension-2 linear space touching all 5 lines. We proved that this class is nef in Theorem \ref{theorem-planesinP5}.
\item $\gamma$: let $H$ denote the proper transform of a 4-dimensional linear space containing $L_1$ and $L_2$. We can write the class $\gamma$ as $q+F_1+F_2$, where $q$ is the pushforward of a class in $H \iso X^4_{2,3}$. By Lemma \ref{lemma-maximality} any subvariety intersecting $\gamma$ negatively must intersect $q$ negatively, but by Lemma \ref{prop-appendix2} we can see that $q$ is nef in $H$, so any such subvariety must be contained in $H$. However, again by Lemma \ref{prop-appendix2} the cone of 2-cycles on $H$ is linearly generated, so $\gamma$ has positive degree on any subvariety contained in $H$. 
\item $\delta$: we can prove this is nef by considering the following decomposition into classes of lower degrees.
\begin{equation*}
\def\m{\phantom{-}}%
\begin{tikzpicture}
\matrix[table] {%
  \delta  & H^2 & F_1 & F_2 & F_3 & F_4 & F_5 & G_1 & G_2 & G_3 & G_4 & G_5\\
  \lambda & 1   & -2  &\m0\;&\m0\;& -1  & -1  & -1\;&\m0\;&\m0\;& 0   & 0\\
  q       & 2   & -1  & -3\;& -3\;& -1  & -1  &\m0\;& -1\;& -1\;& 0   & 0\\
};
\end{tikzpicture}
\end{equation*}
Let $H_{23}$ denote a 4-dimensional linear subspace containing the lines $L_2$ and $L_3$. Then, $q$ is the class of the proper transform a quadric threefold in $H_{23}$ containing $L_2$ and $L_3$ and the 3 points of intersection of the other lines with $H_{23}$. As for our proof above for $\gamma$, the proper transform of $H_{23}$ is the fourfold $X^4_{2,3}$, and the class of a quadric containing all 3 points and 2 lines is nef on this space. So any surface class intersecting $q$ negatively must be contained in $X^4_{2,3}$ and hence must be linearly generated.

We must now show the same for $\lambda$. This class is represented by a codimension-2 linear space containing the line $L_1$ and intersecting the lines $L_4$ and $L_5$. Let $H_{14}$ denote a 4-dimensional linear space containing the lines $L_1$ and $L_4$: then, $\lambda$ is represented by any hyperplane inside $H_{14}$ that contains $L_1$ and the point $p_5$ of intersection of $L_5$ with $H_{14}$. Now let $S$ be a irreducible surface in $X^5_5$. If $S$ is contained in some linear space $H_{14}$, then again by Lemma \ref{prop-appendix2}, $S$ is linearly generated. If not, then for each choice of $H_{14}$, we have that the intersection $S \cap H_{14}$ is a curve $C$. If $S \cdot \lambda <0$, then $C$ must be contained in the base locus of the family of hyperplanes containing $L_1$ and $p_5$, which is exactly the plane $P$ spanned by $L_1$ and $p_5$. As we vary the hyperplane $H_{14}$, the corresponding curves $C$ will sweep out the whole surface $S$, and therefore $S$ is contained in the union of all the planes $P$, which is exactly the span of $L_1$ and $L_5$. Again, this shows that $S$ is linearly generated. 
  
\item $\varepsilon$: this class can be written as $D^2$, where $D=2H-\sum_{i=1}^5 E_i$ is the class of the proper transform of a quadric containing all the lines. Since the intersection of nef divisors is nef by Lemma \ref{lemma-nefintersections}, it is enough to prove that $D$ is nef. By semicontinuity, it is enough to show that $D$ is nef for a specific set of 5 disjoint lines.  Note that it is clear that $D$ restricts to an ample divisor on each exceptional divisor $E_i$, so it is enough to check that it has non-negative degree on proper transforms of curves in $\PP^5$.

  Choosing 5 general lines $L_1,\ldots,L_5$ in $\PP^5$ and using Macaulay2 to calculate the base locus $\operatorname{Bs}(L)$ of the linear system $L$ of quadrics containing all 5, we find that $Bs(L)$ is exactly the union of the $L_i$. So for any curve $C$ on $X^5_5$ that comes from $\PP^5$, there is a representative of $D$ meeting the curve properly, and therefore $D \cdot C$ is non-negative as required.
\end{itemize}
\end{proof}
\subsection*{3-cycles on $X^5_r$}
As a  complement to the previous result, we next show that for 3-cycles on blowups of $\PP^5$, linear generation fails as soon as we blow up 4 lines. This is in keeping with the results of \cite{CLO} which show that as we blow up more, linear generation fails sooner for cones of higher-dimensional cycles.

For this result, recall that the {\it Segre cubic 3-fold} is a copy of $\PP^1 \times \PP^2$ embedded in $\PP^5$ by sections of $O(1,1)$.
\begin{proposition} \label{prop-3cyclesonP5}
The cone of effective 3-cycles $\Eff_3(X^5_r)$ is not linearly generated for $r \geq 4$.
\end{proposition}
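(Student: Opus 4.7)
The plan is to exhibit an effective $3$-cycle on $X^5_4$ whose numerical class does not lie in the linear cone $\Lin_3(X^5_4)$; since linear generation is preserved under further blow-ups (Section \ref{sect-cones}), this establishes the proposition for all $r\geq 4$. The witness will be the proper transform $\wt{\Sigma}$ of a Segre cubic $3$-fold $\Sigma\iso\PP^1\times\PP^2\subset\PP^5$ containing the four blown-up lines as rulings of the form $\PP^1\times\{p_i\}$.

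The first step is to establish existence of such a $\Sigma$. Segre $3$-folds in $\PP^5$ form a $24$-dimensional orbit of $PGL_6$, and an incidence-dimension argument on the universal family of pairs $(\Sigma, L)$ with $L\subset\Sigma$ a ruling of the form $\PP^1\times\{p\}$ shows that requiring a fixed line to arise this way imposes $6$ conditions. Four general lines therefore impose $24$ conditions and yield a finite collection of Segres; that this collection is non-empty can be verified by an explicit Macaulay2 check (as with similar computations elsewhere in the paper) and extended to the generic case by semicontinuity. I expect this existence step to be the main geometric obstacle; the rest is essentially bookkeeping.

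Next I would compute the class $\num{\wt{\Sigma}}\in N_3(X^5_4)$ in the basis $\{H^2, F_i, G_i\}$. The $H^2$-coefficient equals $\deg\Sigma=3$. For each ruling $L_i=\PP^1\times\{p_i\}$, the normal bundle $N_{L_i/\Sigma}\iso\mathcal{O}_{\PP^1}^{\oplus 2}$ is trivial, and its inclusion into $N_{L_i/\PP^5}=\mathcal{O}_{\PP^1}(1)^{\oplus 4}$ is given by a $4\times 2$ matrix of linear forms on $L_i$, so the associated curve in $\GG(1,3)$ has Pl\"ucker degree $2$. Consequently the ruled surface $\wt{\Sigma}\cap E_i$ has class $2H_{\PP^1}H_{\PP^3}+H_{\PP^3}^2$ inside $E_i\iso\PP^1\times\PP^3$, and pushing forward via $E_i\hookrightarrow X^5_4$ (using $j_*(H_{\PP^1}H_{\PP^3})=f_i$ and $j_*(H_{\PP^3}^2)=g_i+2f_i$) gives $E_i\cdot\num{\wt{\Sigma}}=4f_i+g_i$. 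Matching this against the intersection table in the basis $\{H^2, F_j, G_j\}$ yields
\[ \num{\wt{\Sigma}} = 3H^2 - 4\sum_{i=1}^4 F_i - \sum_{i=1}^4 G_i. \]

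Finally, I would exhibit a dual class separating $\num{\wt{\Sigma}}$ from $\Lin_3(X^5_4)$. The generators of the linear cone are the proper-transform classes $H^2-\sum_{i\in S}F_i$, $H^2-2F_i-G_i-\sum_{j\in T}F_j$ and $H^2-2F_i-G_i-2F_j-G_j$ coming from $3$-planes with the evident incidence patterns to the $L_i$, together with the classes $F_i$ and $F_i+G_i$ coming from linear $3$-cycles inside each $E_i$. A direct check using the intersection table shows that
\[ \alpha = 5H^3 - 4\sum_{i=1}^4 f_i - \sum_{i=1}^4 g_i \in N_2(X^5_4) \]
pairs non-negatively with every generator (the tightest inequality occurring for $H^2-2F_i-G_i-F_j-F_k-F_\ell$, where the pairing vanishes), so $\alpha\in\Lin_3(X^5_4)^\dual$. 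A direct computation gives $\alpha\cdot\num{\wt{\Sigma}} = 3\cdot 5 - 4\cdot 4 = -1 < 0$, proving $\num{\wt{\Sigma}}\notin\Lin_3(X^5_4)$ and hence that $\Eff_3(X^5_4)$ is not linearly generated.
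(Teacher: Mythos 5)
Your proposal is correct and follows the paper's proof in all essentials: you use the same witness (the proper transform of a Segre cubic containing the four lines as rulings), arrive at the same class $3H^2-\sum_i(4F_i+G_i)$ (your direct analysis of $\wt\Sigma\cap E_i$ is equivalent to the paper's appeal to Fulton's blowup formula, and your pushforwards $f_i$ and $g_i+2f_i$ check out against the intersection table), and conclude by showing the class lies outside $\Lin_3(X^5_4)$. Your explicit separating functional $\alpha=5H^3-4\sum f_i-\sum g_i$ and your fleshed-out existence argument simply make precise two steps the paper leaves as assertions.
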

\begin{proof}
It suffices to prove the claim for $r=4$. For 4 general lines $L_i$ in $\PP^5$ there is a Segre cubic $S$ containing the lines as rulings $\PP^1 \times \{\text{point}\}$. The normal bundle of $L_i$ in $S$ is easily shown to be $O \oplus O$. Fulton's blowup formula \cite[Theorem 6.7]{Fulton} then shows that the proper transform of $S$ on $X^5_4$ has class
\begin{align*}
[\widetilde{S}]  &= 3H^2 - \sum_{i=1}^4 (4F_i + G_i).
\end{align*}
It is straightforward to check that $[\widetilde{S}]$ is not in the linear cone $\Lin_3(X^5_4)$. 
\end{proof}

\section{\texorpdfstring{Curves and divisors on $X^n_r$}{Curves and divisors on X\^{}n\_r}} \label{section-curves}
In this section, we round out the picture for cycles on the varieties $X^n_r$ by considering linear generation of cones of curves and divisors. We write $l$ to denote the pullback of the class of a line in $\PP^n$ and $l_i$ for the class of a line in an exceptional divisor which is contracted by blowing down.
\begin{proposition} \label{prop-curvesinP4}
For $r \leq 7 $ lines in $\PP^4$, the cone of curves $\Eff_1(X^4_r)$ is linearly generated. For $r \geq 10$ lines in $\PP^4$, this cone is not linearly generated.
\end{proposition}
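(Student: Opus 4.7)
The plan is to handle the two directions separately, and in each case to reduce to a single value of $r$. By the monotonicity noted in Section \ref{sect-cones}, the blowup maps $X^4_r \to X^4_{r-1}$ send $\Lin$ onto $\Lin$ and $\Eff$ onto $\Eff$, so for the positive direction it suffices to prove $\Eff_1(X^4_7) = \Lin_1(X^4_7)$, and for the negative direction it suffices to exhibit failure at $r=10$.

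For the positive direction, I would begin by enumerating the classes of all linear curves in $X^4_7$: proper transforms of lines in $\PP^4$ (meeting zero or one of the $L_i$), together with the two families of rulings inside each exceptional divisor $E_i \iso \PP^1 \times \PP^2$. Their classes in $N_1(X^4_7)$ are computed using Corollary \ref{corollary-classes} with $k = 3$. Feeding these to \texttt{Normaliz} in Macaulay2 produces generators for the dual cone $\Lin_1(X^4_7)^\dual$ inside $N^1(X^4_7)$. Since $\Nef^1 = \Eff_1^\dual$, linear generation is equivalent to each such generator being a nef divisor class. To verify this, I would attempt to write each generator as a non-negative combination of (i) pullbacks from the toric varieties $X^4_{r'}$ with $r' \leq 2$, nef by Proposition \ref{prop-toriccones}, and (ii) proper transforms of hypersurfaces with classes of the form $dH - \sum_{i \in S} E_i$, which are nef whenever the linear system of degree-$d$ hypersurfaces through the lines indexed by $S$ has base locus equal to $\bigcup_{i \in S} L_i$ --- a condition that can be checked in Macaulay2 for each of the finitely many pairs $(d,S)$ that arise.

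For the negative direction at $r=10$, the goal is to exhibit an irreducible curve on $X^4_{10}$ whose class lies outside $\Lin_1(X^4_{10})$. A natural candidate is a rational quintic through all ten lines: the space of such curves has sufficient dimension that meeting $10$ lines, each imposing codimension $2$, still leaves a nonempty family for general $L_1,\dots,L_{10}$. Using the blowup formula in the spirit of Corollary \ref{corollary-classes}, I would compute the class of its proper transform in $N_1(X^4_{10})$ and then check numerically, against the generators of $\Lin_1(X^4_{10})$ computed in Macaulay2, that this class falls outside the linear cone; equivalently, one exhibits a dual generator pairing negatively with it.

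The main obstacle is the positive direction: at $r=7$, both $\Lin_1$ and $\Lin_1^\dual$ have many extremal rays, and each dual generator must individually be certified as nef. Some generators may resist decomposition into standard effective pieces, in which case I expect to need an argument in the spirit of Lemma \ref{lemma-maximality} for curves (reducing to subvarieties contained in the exceptional divisors), or a direct verification that a candidate nef divisor is base-point-free outside the blown-up lines.
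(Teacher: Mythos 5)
There are two genuine gaps here, one in each direction.

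First, your enumeration of the linear curves is wrong, and the error is consequential. You list only proper transforms of lines in $\PP^4$ meeting zero or one of the $L_i$, but for any three general lines in $\PP^4$ there is a line meeting all three (the Schubert number $\sigma_2^3=\sigma_{3,3}=1$ in $\GG(1,4)$), so the linear cone contains the classes $l-l_i-l_j-l_k$, and these --- together with the $l_i$ --- are precisely its extremal generators; the classes $l$ and $l-l_i$ are non-extremal sums of them. This matters because the dual of the smaller cone you describe is strictly larger: it contains, for example, $H-E_1-\cdots-E_7$, whose top self-intersection is $1-7=-6<0$, so it is not nef and your verification step cannot succeed. With the correct generators, $\Lin_1(X^4_7)^\dual$ is spanned by $H$, the $H-E_i$, and the single nontrivial class $3H-\sum_{i=1}^7E_i$, and the paper's entire positive direction reduces to certifying that this one class is nef, which it does exactly as you propose for your family (ii): a Macaulay2 computation shows the base locus of the cubics through $7$ general lines has no curve component coming from $\PP^4$, curves inside the exceptional divisors are handled by linear generation there, and semicontinuity transfers nefness to the general configuration. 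So your strategy for the positive direction is essentially the paper's, but only after the generating set is fixed.

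Second, your witness for the negative direction does not work. A rational quintic meeting each of the ten lines once has proper-transform class $5l-\sum_{i=1}^{10}l_i$, and this class \emph{is} in $\Lin_1(X^4_{10})$: choosing five triples from $\{1,\dots,10\}$ covering every index at least once, one writes $5l-\sum_i l_i$ as a non-negative combination of the classes $l-l_i-l_j-l_k$ and the $l_i$; equivalently, every dual generator ($H$, $H-E_i$, $3H-\sum E_i$) pairs non-negatively with it. The paper's argument is indirect and purely numerical: the dual generator $3H-\sum_{i=1}^{r}E_i$ has top self-intersection $81-9r$, which is negative for $r\ge 10$, so this class lies in $\Lin_1(X^4_r)^\dual$ but not in $\Nef^1(X^4_r)=\Eff_1(X^4_r)^\dual$, forcing $\Lin_1(X^4_r)\subsetneq\Eff_1(X^4_r)$ without ever exhibiting the offending curve. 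If you want an explicit effective class outside the linear cone you would need a curve of degree $d$ meeting the lines with total multiplicity greater than $3d$ (e.g.\ a curve in the base locus of $|3H-\sum E_i|$), not a quintic secant to each line once.
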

\begin{proof}
For any 3 lines in $\PP^4$, there is a line intersecting all 3. Therefore, the linear cone $\Lin_1(X^4_r)$ is generated by classes $l_i$ for $i=1,\ldots,r$ and classes $l-l_i-l_j-l_k$ for distinct $1\leq i, j ,k \leq r$. The dual cone $\Lin_1(X^4_r)^\dual$ is spanned by $H$, classes $H-E_i$ for $i=1,\ldots,r$ and the class $3H-E_1-\cdots-E_r$.

We claim that the last class is nef for any $r \leq 7$. It suffices to prove this for $r=7$. By semicontinuity, it suffices to prove this for any chosen set of 7 disjoint lines in $\PP^4$. A computation in Macaulay2 shows that, for a set of 7 randomly chosen lines, the base locus of $3H-\sum_{i=1}^7 E_i$ has no component that is a proper transform of a curve in $\PP^4$. On the other hand, the cones of curves of exceptional divisors are linearly generated, and so $3H-\sum_{i=1}^7 E_i$ has non-negative degree on any curve contained in an exceptional divisor. So this class is nef. 

In the other direction, using the intersection numbers in Section \ref{subsection-int} we compute that the top self-intersection number of the divisor $3H-\sum_{i=1}^r E_i$ on $X^4_r$ is $81-9r$. For any $r \geq 10$, this is negative, so the class is not nef,  and therefore $\Lin_1(X^4_r)$ does not equal $\Eff_1(X^5_r)$.
\end{proof}

For 8 lines in $\PP^4$, the base locus of the corresponding class $3H-\sum_{i=1}^8 E_i$ has a component that comes from a curve $C$ of degree 19 in $\PP^4$. Computation shows that $C$ intersects each of the blown-up lines transversely in 6 points; if $C$ were irreducible, we would be able to conclude that $3H-\sum_{i=1}^8 E_i$ is nef and hence that the cone of curves is again linearly generated in this case. Unfortunately, it seems to be out of reach of computation to decide whether $C$ is irreducible. 

\begin{proposition} \label{prop-curvesinP5}
The cone of curves $\Eff_1(X^5_r)$ is linearly generated if and only if $r \leq 5$.
\end{proposition}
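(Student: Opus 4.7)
The plan mirrors that of Proposition \ref{prop-curvesinP4}. By the reduction in Section \ref{sect-cones}, it suffices to prove $\Eff_1(X^5_5) = \Lin_1(X^5_5)$ and to exhibit an effective curve class on $X^5_6$ that is not linearly generated.

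For the positive direction, I would first enumerate the linear cone. The Schubert condition for a line in $\PP^5$ to meet a fixed line has codimension $3$ in $\GG(1,5)$, and $\dim \GG(1,5)=8$, so lines meeting $k$ general lines form a family of expected dimension $8-3k$, which is nonempty only for $k\leq 2$. Hence, up to permutation of indices, $\Lin_1(X^5_5)$ is generated by $l_i$, $l$, $l-l_i$, and $l-l_i-l_j$. A divisor $D=aH-\sum b_iE_i$ lies in $\Lin_1(X^5_5)^\dual$ precisely when $a\geq 0$, $b_i\geq 0$, and $a\geq b_i+b_j$ for all $i\neq j$. A routine vertex enumeration of this cone yields five types of extremal ray (up to permutation): $H$, $H-E_i$, $2H-E_i-E_j-E_k$, $2H-E_{i_1}-\cdots-E_{i_4}$, and $2H-\sum_{i=1}^{5}E_i$.

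The bulk of the work is then to show each of these rays is nef. The classes $H$ and $H-E_i$ are base-point free (the latter pulled back from $X^5_1$), hence nef by Proposition \ref{prop-pushpull}(a). The crucial class $2H-\sum_{i=1}^5 E_i$ was already shown to be nef in the proof of Theorem \ref{theorem-2cyclesP5} (when handling the class $\varepsilon$), via the Macaulay2 computation that the base locus of the linear system of quadrics through $5$ general lines in $\PP^5$ is exactly the union of those lines. The two remaining types of quadric class can each be written as $\bigl(2H-\sum_{i=1}^5E_i\bigr)+\sum_{j\in T}[E_j]$ for some index set $T\subset\{1,\dots,5\}$; direct verification shows each lies in $\Lin_1(X^5_5)^\dual$, and Lemma \ref{lemma-maximality} then delivers their nefness.

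For the negative direction, I take $r=6$ and the divisor $D=2H-\sum_{i=1}^{6}E_i$. Using $H^5=1$, $E_i^5=-4$, $HE_i^4=-1$, $H^jE_i^{5-j}=0$ for $2\leq j\leq 4$, and $E_iE_j=0$ for $i\neq j$, a short expansion gives $D^5 = 32 + 6(-6) = -4$. Since this is negative, $D$ is not nef; however, the inequalities $a\geq b_i+b_j$ are still satisfied, so $D\in\Lin_1(X^5_6)^\dual$. By duality this forces $\Eff_1(X^5_6)\supsetneq\Lin_1(X^5_6)$. The main obstacle in the entire argument is the nefness of $2H-\sum_{i=1}^5E_i$ on $X^5_5$: this is not a formal consequence of earlier cone-theoretic results and depends essentially on the base-locus calculation already performed in Theorem \ref{theorem-2cyclesP5}; everything else reduces to cone-theoretic bookkeeping together with the top-intersection computation above.
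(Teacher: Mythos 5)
Your argument is correct and follows the paper's strategy: the positive direction rests on the same key input (nefness of $2H-\sum_{i=1}^5 E_i$, established via the base-locus computation in the proof of Theorem \ref{theorem-2cyclesP5}), and the negative direction is the same top self-intersection computation $\bigl(2H-\sum_{i=1}^r E_i\bigr)^5=32-6r<0$ for $r\geq 6$, combined with duality. The one substantive difference is in your favour: you enumerate the extremal rays of $\Lin_1(X^5_5)^\dual$ in full, whereas the paper asserts that this dual cone is spanned by $H$, the $H-E_i$ and $2H-E_1-\cdots-E_5$ alone. That list is actually incomplete: writing $D=aH-\sum b_iE_i$, the class $2H-E_1-E_2-E_3$ satisfies the defining inequalities $b_i\geq0$ and $a\geq b_i+b_j$ with five independent equalities ($b_4=b_5=0$ and $a=b_i+b_j$ for $i<j\leq3$), so it spans an extremal ray, and it is not a non-negative combination of the paper's listed generators (the coefficient of $H$ would have to be $-1$). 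Your observation that these extra rays have the form $\bigl(2H-\sum_{i=1}^5E_i\bigr)+\sum_{j\in T}E_j$ and are therefore nef by Lemma \ref{lemma-maximality} (alternatively, they are pullbacks of nef classes from the toric varieties $X^5_3$ and $X^5_4$) closes this gap cleanly. Two cosmetic remarks: the generators $l$ and $l-l_i$ you include in $\Lin_1$ are redundant (each is a sum of $l-l_i-l_j$ and classes $l_k$), and your opening promise to ``exhibit an effective curve class on $X^5_6$ that is not linearly generated'' is slightly at odds with the dual argument you actually run, which only shows such a class must exist; both points are harmless.
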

\begin{proof}
  In this case, the linear cone $\Lin_1(X^5_r)$ is generated by the $l_i$ together with classes $l-l_i-l_j$. The dual cone $\Lin_1(X^5_r)^\dual$ is then spanned by $H$, classes $H-E_i$ and the class $2H-E_1-\cdots-E_r$.

  In the proof of Theorem \ref{theorem-2cyclesP5}, we showed that $2H-E_1-\cdots-E_5$ is a nef divisor class on $X^5_5$, and therefore $2H-E_1-\cdots-E_r$ is nef on $X^5_r$ for any $r \leq 5$.

In the other direction, the top self-intersection number of the divisor $2H-\sum_{i=1}^r E_i$ on $X^5_r$ is $32-6r$. For any $r \geq 6$, this is negative, so $2H-\sum_{i=1}^r E_i$ is not nef. Hence, $\Lin_1(X^5_r)$ does not equal $\Eff_1(X^5_r)$.
\end{proof}

\begin{proposition} \label{prop-divisorsonP4}
The cone of divisors $\Eff^1(X^4_r)$ is linearly generated if and only if $r \leq 4$. 
\end{proposition}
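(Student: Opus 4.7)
The plan is to handle the cases $r \leq 4$ and $r \geq 5$ separately via a duality argument. For $r \leq 2$ the variety $X^4_r$ is toric and so $\Eff^1(X^4_r)$ is linearly generated by Proposition \ref{prop-toriccones}. Throughout I will work in the basis $\{l, l_1, \ldots, l_r\}$ for $N_1(X^4_r)$, where the intersection pairings are $l \cdot H = 1$, $l \cdot E_j = 0$, $l_i \cdot H = 0$, and $l_i \cdot E_j = -\delta_{ij}$. A direct check then shows that a class $\alpha = al + \sum b_i l_i$ lies in $\Lin^1(X^4_r)^\dual$ if and only if $b_i \leq 0$ for every $i$ and $a + b_i + b_j \geq 0$ for every pair $i \neq j$ (for $r \geq 2$, the constraints coming from the generators $H$ and $H - E_j$ are consequences of these).

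For $r \leq 4$, by the remark in Section \ref{sect-cones} it suffices to treat $r = 4$; the plan is to show $\Lin^1(X^4_4)^\dual = \Nef_1(X^4_4)$, which by duality yields $\Lin^1(X^4_4) = \Eff^1(X^4_4)$. A direct enumeration of the $1$-dimensional faces of the polyhedral cone above produces (up to permutation of indices) four types of extremal rays: $l$, $l - l_i$, $2l - l_{i_1} - l_{i_2} - l_{i_3}$, and $2l - l_1 - l_2 - l_3 - l_4$. I will represent each by an explicit dominant family of curves on $X^4_4$: a pullback of a general line in $\PP^4$ (nef by Proposition \ref{prop-pushpull}); the strict transform of a general line meeting $L_i$; and strict transforms of irreducible conics lying in planes meeting $3$, respectively $4$, of the blown-up lines. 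Standard Schubert-calculus dimension counts in $G(2,5)$ give these conic families dimensions $5$ and $3$, and show that through a general point of $\PP^4$ there pass finitely many conics of each type, so both families are dominant over $X^4_4$. Nefness of each extremal ray then follows: for any irreducible divisor $\tilde D$ not contained in an exceptional divisor, a general family member is not contained in $\tilde D$ and meets it properly with non-negative intersection, while the only irreducible divisor contained in a given $E_j$ is $E_j$ itself, with which each ray pairs non-negatively by direct computation.

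For $r \geq 5$ it suffices to treat $r = 5$. The same enumeration now produces one additional extremal ray $\alpha = 2l - \sum_{i=1}^5 l_i$ of $\Lin^1(X^4_5)^\dual$, represented by conics meeting all $5$ lines, which form only a $1$-dimensional family that no longer dominates $X^4_5$; there is therefore room for $\alpha$ to fail to be nef. Since any $D = aH - \sum m_i E_i \in \Lin^1$ must satisfy $\alpha \cdot D = 2a - \sum m_i \geq 0$, it suffices to exhibit an effective divisor with $2a < \sum m_i$. A standard dimension count shows that for $5$ general lines in $\PP^4$ the linear system of septics with multiplicity $3$ along each line has expected dimension $\binom{11}{4} - 1 - 5 \cdot 65 = 4$ (each mult-$3$ condition along a line imposing codimension $65$), so is nonempty for general lines; this can also be checked directly in Macaulay2 as with the other computations in the paper. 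A member has class $D = 7H - 3 \sum_{i=1}^5 E_i$, giving $\alpha \cdot D = 14 - 15 = -1 < 0$, so $D \notin \Lin^1(X^4_5)$ and $\Eff^1(X^4_5) \neq \Lin^1(X^4_5)$.

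The main obstacle will be the nefness verifications in the $r = 4$ case: while the dimension counts for the conic families are routine Schubert calculus, some care is needed to argue that the families are genuinely dominant over all of $X^4_4$ (and not merely over the complement of the exceptional locus), which is what allows the proper-intersection argument to go through for every irreducible divisor; the observation that the only irreducible divisor contained in $E_j$ is $E_j$ itself is what reduces the problem to this dominance check.
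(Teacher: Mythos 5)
Your proposal is correct, and its two halves relate differently to the paper's proof. For $r\le 4$ you follow essentially the same route as the paper: reduce to $r=4$, dualize $\Lin^1(X^4_4)$, and represent each extremal ray of the dual by a family of lines or conics dominating $X^4_4$, so that properness of the generic intersection gives nefness. One point in your favour: your enumeration of extremal rays is more complete than the paper's, which lists only $l$, $l-l_i$ and $2l-\sum_{i=1}^4 l_i$ and omits the rays $2l-l_i-l_j-l_k$; these are genuinely extremal (they are not non-negative combinations of the listed classes, as the coefficient of $l_4$ forces the coefficient of $2l-\sum l_i$ to vanish), though they are disposed of by the same dominant-conic argument, so your version quietly closes a small gap. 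For $r\ge 5$ your argument is genuinely different. The paper shows that $\gamma=2l-\sum_{i=1}^5 l_i$ is not nef by taking $D=-K=5H-2\sum E_i$, proving $D$ is nef and big via a decomposition into proper transforms of spans of pairs of lines, and invoking Kodaira's lemma to infer the existence of an effective divisor meeting $\gamma$ negatively. You instead exhibit such a divisor directly: the space of septics triple along the five lines is nonempty by a pure parameter count ($330>5\cdot 65$, valid for any five lines, with no genericity assumption or computer verification needed), and the resulting effective class $7H-3\sum E_i$ pairs to $-1$ with $\gamma$, which is non-negative on every generator of $\Lin^1(X^4_5)$. Your route is shorter and more self-contained; what the paper's approach buys is the slightly stronger statement that the effective class $\gamma$ is itself non-nef, together with the nefness of the anticanonical divisor, which may be of independent interest.
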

\begin{proof}
  It suffices to prove the linear generation claim for $r=4$. The linear cone $\Lin^1(X^4_4)$ is spanned by classes $E_i$ and $H-E_i-E_j$, so as in Proposition \ref{prop-curvesinP5}, the dual cone $\Lin^1(X^4_4)^\dual$ is spanned by curve classes $l$, $l-l_i$ and $2l-\sum_{i=1}^4 l_i$. Curves in the classes $l$ and $l-l_i$ evidently sweep out dense open subsets of $\PP^4$, and consequently, they are nef.  For the last class, we argue as follows. For any point $p \in \PP^4$, Schubert calculus shows that there is a plane $\Pi$ touching our 4 blown-up lines $L_i$ and passing through $p$. There is a conic in $\Pi$ passing through the points $L_i \cap \Pi$ and $p$. The proper transform of this conic on $X^4_4$ then has class $2l-\sum_{i=1}^4 l_i$. Since these conics sweep out a dense open subset of $X^4_4$, the class is  nef as required.

Now we will prove that the cone is not linearly generated for $r=5$; again this implies the claim for $r \geq 5$. In this case, the dual of the cone of linear divisors has an extremal ray spanned by the effective class $\gamma=2l-\sum_{i=1}^5 l_i$. We claim that $\gamma$ is not nef. To see this, it suffices to find a big divisor $D$ on $X^4_5$ with $D \cdot \gamma=0$; applying Kodaira's lemma, we can write $D \equiv A+E$ with $A$ ample and $E$ effective, so we must have $E \cdot \gamma <0$. Choose $D=-K=5H-2\sum_{i=1}^5 E_i$. This class has top self-intersection $D^4>0$ as one checks again using the intersection numbers in Section \ref{subsection-int}; therefore it is enough to show that $D$ is nef. The divisor $D$ is represented by the union of the proper transforms of the linear spaces $\operatorname{Span}(L_i,L_{i+1})$ (where subscripts should be read modulo 5), and so it is enough to check that the restriction to each of these proper transforms is nef. Note, however, that each proper transform is isomorphic to $X^3_{2,3}$ and the restriction of $D$ to $X^3_{2,3}$ again decomposes into a union of proper transforms of linear spaces, which are now of the form $X^2_{1,1}$ or $X^2_{0,3}$. Both of these surfaces are toric, so it is straightforward to check that the restriction of $D$ to either surface is nef. Hence $D$ is nef as required. 
\end{proof}

\begin{proposition} \label{prop-divisorsonP5}
  The cone of divisors $\Eff^1(X^5_r)$ is linearly generated if and only if $r \leq 3$.
\end{proposition}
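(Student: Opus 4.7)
For $r\leq 3$, three general disjoint lines in $\PP^5$ are $\operatorname{PGL}(6)$-equivalent to three pairwise disjoint coordinate lines (for instance $\overline{e_0 e_1}$, $\overline{e_2 e_3}$, $\overline{e_4 e_5}$), so $X^5_r$ is toric and Proposition~\ref{prop-toriccones} gives linear generation of $\Eff^1(X^5_r)$. For the converse, the descent property recorded in Section~\ref{sect-cones} reduces us to showing that $\Eff^1(X^5_4)$ is not linearly generated.

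Since a hyperplane in $\PP^5$ contains at most two general lines, $\Lin^1(X^5_4)$ is generated by the classes $E_i$ ($i=1,\dots,4$) and $H-E_i-E_j$ ($i\neq j$). A direct check (or a \texttt{Normaliz} computation) shows that the effective curve class
\[
  \gamma \;=\; 2\ell - \sum_{i=1}^{4} \ell_i
\]
lies in $\Lin^1(X^5_4)^\dual$: indeed $E_i\cdot\gamma=1$ and $(H-E_i-E_j)\cdot\gamma=0$. It therefore suffices to exhibit an effective divisor $D'$ on $X^5_4$ with $D'\cdot\gamma<0$, since such a $D'$ lies in $\Eff^1(X^5_4)\setminus\Lin^1(X^5_4)$.

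The candidate $D'$ is the proper transform of a quadric $Q'\subset\PP^5$ that is singular along $L_1$ and contains $L_2,L_3,L_4$ smoothly. To construct $Q'$, project away from $L_1$: a dimension count in the Grassmannian shows that for general $L_2,L_3,L_4$ no $2$-plane through $L_1$ meets two of them simultaneously, so the projected lines $\ell_j'\subset\PP^3$ ($j=2,3,4$) are pairwise skew. Three pairwise skew lines in $\PP^3$ lie on a unique smooth quadric surface, and $Q'$ is defined as the cone over this quadric with vertex $L_1$. By the blowup formula the proper transform of $Q'$ has class
\[
  D' \;=\; 2H - 2E_1 - E_2 - E_3 - E_4,
\]
and a direct calculation using the intersection numbers in Section~\ref{subsection-int} gives $D'\cdot\gamma = 4-5 = -1 < 0$, as required.

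The main point of work is the geometric construction of $Q'$: verifying that its singular locus is exactly $L_1$, so that it meets $L_1$ with multiplicity $2$ while meeting $L_2,L_3,L_4$ only with multiplicity $1$. I expect this to reduce to writing $Q'$ in coordinates adapted to $L_1$ (say $L_1=\{x_2=x_3=x_4=x_5=0\}$ and $Q'$ defined by a rank-$4$ quadratic form in $x_2,\dots,x_5$), after which the multiplicities and the class of the proper transform are immediate.
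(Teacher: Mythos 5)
Your proof is correct, and it follows the same overall strategy as the paper (toric for $r\leq 3$; exhibit an effective divisor outside $\Lin^1(X^5_4)$ for $r=4$), but with a different witness. The paper uses the class $3H-2E_1-2E_2-2E_3-E_4$ of a cubic fourfold double along $L_1,L_2,L_3$ and containing $L_4$, whose existence is justified by a dimension count ($3\cdot 16+4=52$ conditions on the $55$-dimensional space of cubics), and whose non-membership in the linear cone is left as a direct check. You instead take the quadric cone $2H-2E_1-E_2-E_3-E_4$ obtained by coning the unique smooth quadric surface through the three pairwise skew images of $L_2,L_3,L_4$ under projection from $L_1$; your skewness verification (no $2$-plane through $L_1$ meets two of the other lines, a codimension-$4$ condition in the $3$-dimensional family of $2$-planes containing $L_1$) is sound, and the rank-$4$ description makes the multiplicities ($2$ along the vertex line, $1$ along the others) immediate. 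You also supply an explicit dual certificate, the class $\gamma=2\ell-\sum_i\ell_i$, which is nonnegative on the generators $E_i$ and $H-E_i-E_j$ of $\Lin^1(X^5_4)$ but pairs to $-1$ with your divisor. The net effect is that your construction is more explicit and self-verifying than the paper's, at the cost of producing a quadric rather than the cubic the paper records; both witnesses work equally well for the statement.
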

\begin{proof}
  For $r \leq 3$, the variety $\Eff^1(X^5_r)$ is toric, so the claim follows from Proposition \ref{prop-toriccones}.

 For the converse, as above, it suffices to prove the claim when $r=4$. The divisor class $3H-2E_1-2E_2-2E_3-E_4$ is not in the linear cone. This class is represented by the proper transform of a cubic 4-fold double along $L_1, \, L_2, \, L_3$ and containing $L_4$. A straightforward dimension count shows that such 4-folds exist for any 4-tuple of lines in $\PP^5$, and therefore $\Eff^1(X^5_r)$ is not linearly generated.
\end{proof}

\section{\texorpdfstring{Appendix: 2-cycles on $X^3_{2,2}$ and $X^4_{3,2}$}{Appendix: 2-cycles on X\^{}3\_\{2,2\} and X\^{}4\_\{3,2\}}}
In this section, we prove linear generation for the cones of effective 2-cycles on the spaces $X^3_{2,2}$ and $X^4_{3,2}$. These linear generation results were used in the proofs of Theorems \ref{theorem-2cyclesP4} and \ref{theorem-2cyclesP5}.

\begin{lemma} \label{lemma-appendix1}
 The cone of effective 2-cycles $\Eff_2(X^3_{2,2})$ is linearly generated.
\end{lemma}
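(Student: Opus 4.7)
The plan is to prove the equivalent dual statement: every class in $\Lin_2(X^3_{2,2})^\vee \subset N_1(X^3_{2,2})$ pairs non-negatively with every effective divisor (i.e.\ is nef). Combined with the automatic inclusion $\Lin_2(X^3_{2,2}) \subseteq \Eff_2(X^3_{2,2})$, biduality then yields the desired equality of cones.

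First I would enumerate the $2$-dimensional linear subvarieties of $X^3_{2,2}$. Since $\dim X^3_{2,2} = 3$, these are divisors, and by the definition in Section \ref{sect-cones} they consist of the four exceptional divisors $E_1, E_2, e_1, e_2$ together with the proper transforms of planes in $\PP^3$. Because $L_1, L_2$ are skew and $p_1, p_2$ are in general position with respect to the lines, a plane can contain at most one of the $L_i$ and cannot contain any $L_i$ simultaneously with both $p_1, p_2$; an enumeration by incidence type thus produces ten plane classes of the forms $H$, $H - E_i$, $H - e_j$, $H - E_i - e_j$, $H - e_1 - e_2$, for a total of $14$ generators which Corollary \ref{corollary-classes} makes explicit in the basis $(H, E_1, E_2, e_1, e_2)$.

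Next, I would compute the extreme rays of $\Lin_2(X^3_{2,2})^\vee$ using the \texttt{Normaliz} package invoked elsewhere in the paper. Working in the basis $\ell = H^2,\, f_i = HE_i,\, \ell_j = -e_j^2$ of $N_1(X^3_{2,2})$, I expect the output to be a list of rays represented by (proper transforms of) the following families of rational curves in $\PP^3$: a general line $(\ell)$; lines meeting $L_i$ $(\ell - f_i)$; lines through $p_j$ $(\ell - \ell_j)$; lines meeting both $L_1, L_2$ $(\ell - f_1 - f_2)$; conics through $p_1, p_2$ meeting one $L_i$ $(2\ell - f_i - \ell_1 - \ell_2)$; and conics through $p_1, p_2$ meeting both $L_i$ $(2\ell - f_1 - f_2 - \ell_1 - \ell_2)$. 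A short dimension count shows that curves of each of these types sweep out $\PP^3$, and hence that their proper transforms sweep out a dense open subset of $X^3_{2,2}$. A general member of a covering family lies in no prime divisor of $X^3_{2,2}$, so it meets every prime divisor properly, giving the required non-negative intersection; each extreme ray is therefore nef.

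The main obstacle will be checking the covering property for the two conic families. Through a general point $Q \in \PP^3$, I would produce a conic of the required class by first fixing the plane $\Pi_Q = \operatorname{Span}(p_1, p_2, Q)$ and then exhibiting a conic in $\Pi_Q$ passing through $p_1, p_2, Q$ together with the one or two points where the relevant lines $L_i$ meet $\Pi_Q$. For $L_1, L_2$ in general position with respect to $p_1, p_2$ these four or five prescribed points lie in general linear position in $\Pi_Q$, so a standard dimension count on conics in $\PP^2$ guarantees existence of the required conic, completing the verification.
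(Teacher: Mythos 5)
Your proposal is correct and follows essentially the same route as the paper: dualize, compute the extreme rays of $\Lin_2(X^3_{2,2})^\vee$ (your predicted list of line and conic classes matches the paper's table exactly), and verify nefness by exhibiting covering families of irreducible curves, with the conics constructed inside the plane spanned by the two blown-up points and a general point of $\PP^3$. The only cosmetic difference is that you enumerate the fourteen linear divisor classes explicitly, where the paper leaves that step implicit before invoking the dual-cone computation.
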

\begin{proof} Writing down all linear classes on $X^3_{2,2}$ and computing the dual, we find that 
that $\Lin_2(X^3_{2,2})^\dual$ is spanned by the classes
\begin{equation*}
\def\m{\phantom{-}}%
\begin{tikzpicture}
\matrix[table] {%
  {}          & H^2 & HE_1 & HE_2 & -E_3^2 & -E_4^2\\
  \alpha      & 1   &\m0\; &\m0\; &\m0\;   &\m0\;\\
  \beta       & 1   & -1\; &\m0\; &\m0\;   &\m0\;\\
  \gamma      & 1   & -1\; & -1\; &\m0\;   &\m0\;\\
  \delta      & 1   &\m0\; &\m0\; & -1\;   &\m0\;\\
  \varepsilon & 2   & -1\; &\m0\; & -1\;   & -1\;\\
  \kappa      & 2   & -1\; & -1\; & -1\;   & -1\;\\
};
\end{tikzpicture}
\end{equation*}
In each case, irreducible curves representing the class cover a dense open set in $X^3_{2,2}$. For example, the class $\kappa$ is represented by proper transforms of conics touching $L_1$ and $L_2$ and passing through $p_3$ and $p_4$. Choosing a general point $p \in \PP^3$, there is a plane $\Pi$ containing $p$, $p_3$ and $p_4$; this plane intersects $L_1$ and $L_2$ in points $q_1$ and $q_2$, and there is a irreducible conic in $\Pi$ through the 5 points $q_1$, $q_2$, $p_3$, $p_4$ and $p$.   
\end{proof}

\begin{lemma} \label{lemma-x331}
  The cone of effective 2-cycles $\Eff(X^3_{3,1})$ is linearly generated.
\end{lemma}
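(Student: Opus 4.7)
The plan is to follow the strategy of Lemma \ref{lemma-appendix1}. Since $X^3_{3,1}$ is a threefold, $\Eff_2$ coincides with the pseudoeffective cone of divisors, and the linear cone $\Lin_2(X^3_{3,1})$ is generated by the exceptional divisors $E_i, e$ together with the proper transforms of planes in $\PP^3$. Because three general lines in $\PP^3$ are pairwise skew, no plane contains two of the $L_i$, and the extremal linear generators reduce to $E_1, E_2, E_3, e$ and the classes $H - E_i - e$ (a plane through $L_i$ and $p$). I would feed this list into the {\tt Normaliz} package and compute the extremal rays of the dual cone $\Lin_2(X^3_{3,1})^\dual$ inside $N_1(X^3_{3,1})$; working directly in the basis $\{l, l_1, l_2, l_3, f\}$, the dual cone is cut out by $b_i \le 0$, $c \le 0$, $a+b_i+c \ge 0$, whose extremal rays are the classes $l$, $l - l_i$, $l - l_i - l_j$, $l - l_1 - l_2 - l_3$, and $l - f$.

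For each dual generator the task is to verify nefness, mirroring the covering-family arguments from Lemma \ref{lemma-appendix1}. The classes $l$, $l-l_i$, $l-l_i-l_j$ and $l-f$ are represented by irreducible families of proper transforms of lines in $\PP^3$ through a general point with the indicated incidence to the $L_i$ and to $p$; these families sweep out a dense open subset of $X^3_{3,1}$ and so each class is nef. Combined with Lemma \ref{lemma-maximality} to deal with subvarieties contained in exceptional divisors, this handles all of the dual cone except one ray.

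The hardest step is the ray $l - l_1 - l_2 - l_3$. Three general pairwise skew lines in $\PP^3$ lie on a unique smooth quadric surface $Q$, and the lines meeting all three are precisely the opposite ruling of $Q$; they sweep out only $Q$ rather than a dense open subset of $\PP^3$, so the covering-family strategy breaks down. Moreover, the proper transform $\widetilde Q$ has class $2H - E_1 - E_2 - E_3$, and a direct computation yields $\widetilde Q \cdot (l - l_1 - l_2 - l_3) = -1$, so this ray is in fact not nef. The main obstacle to completing the proof is therefore precisely the quadric through three skew lines; resolving it would require either enlarging the list of generators of $\Lin_2$ by $[\widetilde Q]$, replacing the single inequality $(l - l_1 - l_2 - l_3)\cdot(-) \ge 0$ by a larger collection of inequalities accounting for $\widetilde Q$, or else tightening the hypothesis on the configuration of blown-up lines to exclude this quadric.
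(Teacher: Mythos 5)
Your computation is internally consistent, but it analyses the wrong variety, and as a result it refutes rather than proves the statement. You read $X^3_{3,1}$ according to the convention of Section 1 ($r$ lines, then $s$ points), i.e.\ as the blowup of $\PP^3$ in three general lines and one point, and for that space your conclusion is correct: the dual of the linear cone contains the extremal ray $l-l_1-l_2-l_3$, the unique quadric through three pairwise skew lines has class $2H-E_1-E_2-E_3$, has degree $-1$ on that ray, and is not a non-negative combination of $E_1,E_2,E_3,e$ and $H-E_i-e$, so there $\Eff_2$ strictly contains $\Lin_2$. However, the lemma's subscripts are transposed: both the paper's own proof (whose dual-cone table has a single column $HE_1$ and three columns $-E_i^2$, and whose covering families are conics meeting one line $L_1$ and passing through points $p_2,p_3,p_4$) and the only place the lemma is used (the class $\alpha_8$ in Lemma \ref{prop-appendix2}, where $\Pi$ is a $\PP^3$ containing the line $L_1$ and meeting the remaining centres in three points) concern the blowup of $\PP^3$ in \emph{one} line and \emph{three} points, i.e.\ $X^3_{1,3}$ in the paper's notation. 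On that space the obstruction you found does not exist, since with only one blown-up line there is no class of the form $l-l_1-l_2-l_3$.

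For the intended variety the dual of the linear cone is spanned by the class of a line, of a line meeting $L_1$, of a line through one of the $p_i$, of a conic meeting $L_1$ and passing through two of the points, and of the cubic class $3H^2-2HE_1+E_2^2+E_3^2+E_4^2$. The paper verifies nefness of the first four by exhibiting covering families (for the conic class, the planes spanned by $p_2$, $p_3$ and a moving point of $L_1$), and handles the cubic class by decomposing it as the conic class plus the class $H^2-HE_1+E_4^2$ of a line through $p_4$ meeting $L_1$; the latter can only be negative on the proper transform of the plane spanned by $p_4$ and $L_1$, which is a linear subvariety, on which any class in $\Lin_2^\dual$ is automatically non-negative. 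So your overall method (dual-cone computation, covering families, and reduction of the residual negativity locus to a linear subvariety) is exactly the paper's; to complete the proof you need to redo the dual-cone computation for one line and three points and then run that same argument on the resulting five rays. Your observation about the quadric through three skew lines is a genuine and correct remark, but it shows only that the statement must be read as $X^3_{1,3}$, not that the lemma as used is false.
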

\begin{proof}
  The dual $\Lin_2(X^3_{3,1})^\dual$ of the linear cone of 2-cycles is spanned by the classes
\begin{equation*}
\def\m{\phantom{-}}%
\begin{tikzpicture}
\matrix[table] {%
  {}          & H^2 & HE_1 & -E_2^2 & -E_3^2 & -E_4^2\\
  \alpha      & 1   &\m0\; &\m0\;   &\m0\;   &\m0\;\\
  \beta       & 1   & -1\; &\m0\;   &\m0\;   &\m0\;\\
  \gamma      & 1   &\m0\; & -1\;   &\m0\;   &\m0\;\\
  \delta      & 2   & -1\; & -1\;   & -1\;   &\m0\;\\
  \varepsilon & 3   & -2\; & -1\;   & -1\;   & -1\;\\
};
\end{tikzpicture}
\end{equation*}
Curves representing the first three classes evidently cover $X$, hence are nef. For the class $\delta$, picking any point $p$ on $L_1$, the plane spanned by $p_2$, $p_3$ and $p$ is covered by irreducible conics with class $\delta$; varying $p$ along $L_1$ these conics cover $X$, and so $\varepsilon$ is nef. Finally, we can write $\varepsilon$ as $\delta+(H^2-HE_1+E_4^2)$; since $\delta$ is nef, any divisor which is negative on $\varepsilon$ must be negative on $H^2-HE1+E_4^2$, which is the class of a line passing through $p_4$ and intersecting $L_1$. If $\pi$ is the plane spanned by $p_4$ and $L_1$, these lines sweep out $\pi$, and therefore $H^2-HE_1+E_4^2$ and hence $\varepsilon$ can be negative only on the proper transform of $\pi$. Note however that $\pi$ is a linear class, and $\varepsilon$ is in the dual of the linear cone, so $\varepsilon$ is in fact nef.
\end{proof}

\begin{lemma} \label{prop-appendix2}
 The cone of effective 2-cycles  $\Eff(X^4_{2,3})$ is linearly generated.
\end{lemma}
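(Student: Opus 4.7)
The plan is to follow the template of Lemmas \ref{lemma-appendix1} and \ref{lemma-x331}. I would first fix a basis of $N^2(X^4_{2,3})$ consisting of $H^2$, the classes $F_i := HE_i$ and $G_i := -E_i^2$ for $i=1,2$, and classes $g_j$ for $j=1,2,3$ coming from $\pm e_j^2$, with signs chosen so that effective cycles supported on exceptional divisors have non-negative coefficients. Using Corollary \ref{corollary-classes}, I would then enumerate the classes of all linear 2-cycles on $X^4_{2,3}$: proper transforms of planes with every allowed incidence pattern with $L_1, L_2, p_1, p_2, p_3$, together with torus-invariant 2-cycles on the exceptional divisors $E_i \iso \PP^1 \times \PP^2$ and $e_j \iso \PP^3$.

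Next I would use Macaulay2 with the {\tt Normaliz} package to compute a full list of generators of the dual cone $\Lin_2(X^4_{2,3})^\dual$, listed up to permutation of the indices of the lines and points. To shorten the verification, I would apply Lemma \ref{lemma-maximality} to restrict attention to a maximally incident subset of generators, in the sense of the proof of Theorem \ref{theorem-2cyclesP5}.

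For each generator in this reduced list, I expect that one of the following techniques suffices: (i) pullback from the toric blowup $X^4_{2,0}$ (which is toric, since blowing up $\PP^4$ at two general lines is toric), giving nefness by Propositions \ref{prop-toriccones} and \ref{prop-pushpull}; (ii) direct appeal to Theorem \ref{theorem-planesinP4} with $r=2$, which gives nefness of the class of the proper transform of a plane meeting both lines; (iii) representation by an explicit covering family, for instance conics through $p_1, p_2, p_3$ meeting $L_1$ and $L_2$; and (iv) Lemma \ref{lemma-maximality}, which handles generators that differ from a known nef class by cycles supported on exceptional divisors.

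The hard part will be the top-degree generators that do not fit any of these patterns directly. For those I would imitate the decomposition strategy used for the classes $\lambda$ and $\xi$ in the proof of Theorem \ref{theorem-2cyclesP4}: write the generator as a sum of two effective classes so that any surface intersecting it negatively must be contained in the proper transform $\wt H$ of some 3-dimensional linear subspace of $\PP^4$, such as $\operatorname{Span}(L_1, L_2)$, giving $\wt H \iso X^3_{2,0}$, or a hyperplane through one of the lines and some of the points, giving $\wt H \iso X^3_{1,s}$ for appropriate $s$. Each such 3-fold $\wt H$ is a blowup of $\PP^3$ at a small number of lines and points whose cone of 2-cycles is linearly generated by a direct dualisation analogous to Lemmas \ref{lemma-appendix1} and \ref{lemma-x331} (or by toricity). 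Since the generator lies in $\Lin_2(X^4_{2,3})^\dual$ and any surface in $\wt H$ has a linearly generated class, the generator is nonnegative on every such surface, completing the verification.
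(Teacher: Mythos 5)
Your overall template is the one the paper uses: list the linear $2$-cycles, compute the generators of $\Lin_2(X^4_{2,3})^\dual$ with Normaliz, and certify each generator as nef by a mix of pullbacks from toric (or smaller) blowups and decompositions that force any negatively-intersecting surface into the proper transform of a $3$-dimensional linear subspace, where linear generation is known (the paper uses $X^3_{3,1}$, i.e.\ Lemma \ref{lemma-x331}, and the toric $X^3_2$ for exactly this purpose). So the architecture of your plan matches the paper's proof.

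There are, however, two concrete soft spots. First, the top generator $\alpha_{10}=4H^2-4HE_1-4HE_2+E_1^2+E_2^2+e_1^2+e_2^2+e_3^2$ is not handled in the paper by decomposition at all: it is written as $D^2$ with $D=2H-\sum E_i$, and nefness of $D$ is imported from the proof of Theorem \ref{theorem-2cyclesP5} (where it rests on a Macaulay2 computation of the base locus of the quadrics through the five lines in $\PP^5$); Lemma \ref{lemma-nefintersections} then finishes. Your plan proposes only the decomposition-plus-containment strategy for such classes, and it is not clear that $\alpha_{10}$ admits a decomposition whose pieces can each be certified that way --- this is the one generator where your plan is not obviously executable as written. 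Second, your technique (iii) is dimensionally off: generators of $\Lin_2(X^4_{2,3})^\dual$ are $2$-cycle classes on a fourfold, so they pair with surfaces, not curves; a covering family of conics (which are curves) says nothing about such a pairing, and even a covering family of surfaces does not give nefness of a $2$-cycle class --- one needs a member meeting every given surface in a finite set, which is precisely the proper-intersection statement that Theorem \ref{theorem-planesinP4} has to work to establish. Neither issue derails the strategy, but both need to be repaired before the verification closes.
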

\begin{proof}
  The strategy of proof is very similar to previous cases. The dual $\Lin_2(X^4_{2,3})^\dual$ of the linear cone of 2-cycles is spanned by the classes
\begin{equation*}
\def\m{\phantom{-}}%
\begin{tikzpicture}
\matrix[table] {%
  {}          & H^2 & HE_1 & HE_2 & -E_1^2 & -E_2^2 & -E_3^2 & -E_4^2 & -E_5^2\\
  \alpha_1    & 1   &\m0\; &\m0\; &\m0\;   &\m0\;   &\m0\;   &\m0\;   &\m0\;\\
  \alpha_2    & 1   & -1\; &\m0\; &\m0\;   &\m0\;   &\m0\;   &\m0\;   &\m0\;\\
  \alpha_3    & 1   & -1\; & -1\; &\m0\;   &\m0\;   &\m0\;   &\m0\;   &\m0\;\\
  \alpha_4    & 1   &\m0\; &\m0\; & -1\;   &\m0\;   &\m0\;   &\m0\;   &\m0\;\\
  \alpha_5    & 1   & -2\; &\m0\; & -1\;   &\m0\;   &\m0\;   &\m0\;   &\m0\;\\
  \alpha_6    & 2   & -2\; & -1\; & -1\;   & -1\;   &\m0\;   &\m0\;   &\m0\;\\
  \alpha_7    & 2   & -1\; & -1\; &\m0\;   &\m0\;   & -1\;   & -1\;   &\m0\;\\
  \alpha_8    & 3   & -3\; & -2\; & -1\;   &\m0\;   & -1\;   & -1\;   & -1\;\\
  \alpha_9    & 3   & -3\; & -1\; & -3\;   & -1\;   & -1\;   &\m0\;   &\m0\;\\
  \alpha_{10} & 4   & -4\; & -4\; & -1\;   & -1\;   & -1\;   & -1\;   & -1\;\\
};
\end{tikzpicture}
\end{equation*}
The first 6 classes are pulled back from classes on toric varieties that are easily checked to be nef. Similarly, $\alpha_7$ is pulled back from a nef class on $X^4_{2,2}$. The last class $\alpha_{10}$ can be written as $D^2$, where $D$ is the divisor class $2H-\sum_i E_i$. This is the pullback of the class $\widetilde{D}=2H-\sum_i E_i$ on $X^5_5$, which was shown to be nef in Theorem \ref{theorem-2cyclesP5}, so $D$ is a nef divisor, and hence by Lemma \ref{lemma-nefintersections}, we know that $\alpha_{10}=D^2$ is nef too.

 It remains to treat $\alpha_8$ and $\alpha_9$, which we do by decomposition. We start with $\alpha_8$, which can be decomposed as follows:
\begin{equation*}
\def\m{\phantom{-}}%
\begin{tikzpicture}
\matrix[table] {%
  \alpha_8 & H^2 & HE_1 & HE_2 & -E_1^2 & -E_2^2 & -E_3^2 & -E_4^2 & -E_5^2\\
  \beta_1  & 1   &\m0\; & -1   &\m0\;   & 0      &\m0\;   &\m0\;   & -1\;\\
  \beta_2  & 2   & -3\; & -1   & -1\;   & 0      & -1\;   & -1\;   &\m0\;\\
};
\end{tikzpicture}
\end{equation*} 
The class $\beta_1$ is pulled back from a nef class on the toric variety $X^4_{1,1}$ and so is nef. The class $\beta_2$ is represented by the proper transform of a quadric containing $L_1$, intersecting $L_2$, and passing through $p_3$ and $p_4$. Let $\Pi$ be a 3-dimensional linear space containing $L_1$ and passing through $p_3$ and $p_4$; then $\Pi$ intersects $L_2$ in a point, call it $p_2$. Let $\pi$ be the plane spanned by $L_1$ and $p_2$ and let $\pi^\prime$ be any plane containing $p_3$ and $p_4$: then $Q=\pi \cup \pi^\prime$ is a quadric with class $\beta$. Swapping the roles of $p_2$ and $p_3$, say, we see that the base locus of the linear system $|Q|$ consists of a union of lines in $\Pi$. Writing down the classes of linear 1-cycles on $X^3_{3,1}$, we check that $Q$ has positive degree on any such class. So $Q$ is nef inside the proper transform of $\Pi$. It follows that any 2-cycle which intersects $\alpha_8$ and hence $\beta_2$ negatively must be contained in the proper transform of $\Pi$. On the other hand, by Lemma \ref{lemma-x331}, we know that 2-cycles in $X^3_{3,1}$ are linearly generated, and $\alpha_8$ is in the dual of the linear cone. Hence $\alpha_8$ is nef, as required.

For $\alpha_9$ we consider the following decomposition:
\begin{equation*}
\def\m{\phantom{-}}%
\begin{tikzpicture}
\matrix[table] {%
  \alpha_9 & H^2 & HE_1 & HE_2 & -E_1^2 & -E_2^2 & -E_3^2 & -E_4^2 & -E_5^2\\
  \gamma_1 & 1   &\m0\; &\m0\; &\m0\;   &\m0\;   & -1\;   & 0 & 0\\
  \gamma_2 & 1   & -2\; & -1\; & -1\;   &\m0\;   &\m0\;   & 0 & 0\\
  \gamma_3 & 1   & -1\; & -2\; &\m0\;   & -1\;   &\m0\;   & 0 & 0\\
};
\end{tikzpicture}
\end{equation*} 
Again the first class is pulled back from a nef class on a toric variety, hence is nef. For $\gamma_2$ (and similarly for $\gamma_3$), we argue as follows: $\gamma_2$ is represented by the proper transform of a plane $\pi$ containing $L_1$ and intersecting $L_2$ in a point. Let $\Pi$ be the 3-dimensional space spanned by $L_1$ and $L_2$. The proper transform $\widetilde{\Pi}$ of $\Pi$ is a toric variety $X^3_2$. One checks that the proper transform of $\pi$ is a nef divisor in $X^3_2$, hence any surface intersecting $\gamma_2$ negatively must lie in $\widetilde{\Pi}$. On the other hand, as $X^3_2$ is toric, its cone of effective divisors is linearly generated, and so $\gamma_2$ has non-negative degree on surfaces contained in $X^3_2$. 
 
\end{proof}

\end{document}